\newcommand{\newsection}[1]{\setcounter{equation}{0} \section{#1}}
\newcommand{\bea}{\begin{eqnarray}}
\newcommand{\eea}{\end{eqnarray}}
\newcommand{\clb}{\mathcal{B}}
\newcommand{\clh}{\mathcal{H}}
\def\textmatrix#1&#2\\#3&#4\\{\bigl({#1 \atop #3}\ {#2 \atop #4}\bigr)}
\def\dispmatrix#1&#2\\#3&#4\\{\left({#1 \atop #3}\ {#2 \atop #4}\right)}
\newcommand{\be}{\begin{equation}}
\newcommand{\ee}{\end{equation}}
\newcommand{\ben}{\begin{eqnarray*}}
\newcommand{\een}{\end{eqnarray*}}
\newcommand{\NI}{\noindent}
\newcommand{\bi}{\begin{itemize}}
\newcommand{\ei}{\end{itemize}}
\newtheorem{Theorem}{\sc Theorem}[section]
\newtheorem{Lemma}[Theorem]{\sc Lemma}
\theoremstyle{definition}
\theoremstyle{plain}
\newtheorem{thm}{Theorem}[section]
\newtheorem{cor}[thm]{Corollary}
\theoremstyle{definition}
\newtheorem{rem}[thm]{Remark}
\numberwithin{equation}{section}
\let\phi=\varphi
\begin{document}

\title[Orlicz extension of Numerical radius inequalities]
{Orlicz extension of Numerical radius inequalities}

\author[Maji]{Amit Maji}
\address{Indian Institute of Technology Roorkee, Department of Mathematics, Roorkee-247 667, Uttarakhand,  India}
\email{amit.maji@ma.iitr.ac.in, amit.iitm07@gmail.com}
\author[Manna]{Atanu Manna$^\dag$ }
\address{Indian Institute of Carpet Technology, Bhadohi-221401, Uttar Pradesh, India}
\email{ atanu.manna@iict.ac.in, atanuiitkgp86@gmail.com ( $^\dag$Corresponding author)}
\author[Mohapatra]{Ram Mohapatra }
\address{University of Central Florida, Department of Mathematics, 4000 Central Florida Blvd, Orlando, USA}
\email{ ram.mohapatra@ucf.edu}

\subjclass[2010]{47A12, 47A63}

\keywords{Numerical radius, Operator inequality, Orlicz function, Hilbert space operators, Positive operator}

\begin{abstract}
In this paper, we achieve new and improved numerical radius
inequalities of operators defined on a Hilbert space by using Orlicz function and Hermite-Hadamard inequality.
The upper bounds of various inequalities involving numerical radii have been
obtained. Finally, we compute an upper bound of the numerical radius
for block matrices of the form
$\begin{bmatrix}O
& P\\Q & O
\end{bmatrix}$,
where $P, Q$ are any bounded linear operators on a Hilbert space.
\end{abstract}
\maketitle

\newsection{Introduction}

The concepts of numerical range and numerical radius of an operator
play a very significant role and have been studied extensively due
to their enormous applications in engineering, quantum computing,
quantum mechanics, numerical analysis, differential equations, fluid dynamics,
the geometry of Banach spaces, etc (see \cite{AXELSSON}, \cite{BONSALL}, \cite{HORN}).  The study of numerical radius
inequalities and its various improvements are the latest trends
of research in the theory of operators on Hilbert space.

We denote $\clb(\clh)$ as the $C^{*}$-algebra of all bounded linear
operators on a Hilbert space $\clh$. For any bounded linear operator
$A$ on $\clh$ the numerical radius, denoted by $w(A)$, yields a norm on
$\clb(\clh)$. The fundamental inequality between numerical radius and
operator norm of any operator $A\in \mathcal{B}(\mathcal{H})$ is as follows:
\begin{align}\label{opnuinq}
\frac{1}{2}\|A\|\leq w(A) \leq \|A\|.
\end{align}
This says that the operator norm and the norm induced by the numerical radius are
equivalent on $\mathcal{B}(\mathcal{H})$. If $A$ is a normal operator
(that is, $A^*A=AA^*$ ), then $w(A)=\|A\|$ and if $A^2=O$, the zero operator
on $\clh$, then we have $w(A)=\frac{1}{2}\|A\|$. Many researchers later discovered sharp inequalities for a single operator.

Some preliminary results on numerical radius and its
applications to stability theory of finite-difference approximations for
hyperbolic initial value problems were presented by Goldberg and Tadmor
\cite{GOLDON}, (see also \cite{GOLDNUM}). The study of power inequalities
for numerical range,
and numerical radius (that is, $w(A^n)\leq w(A)^n$, $n\in \mathbb{N}$,
$A\in \mathcal{B}(\mathcal{H})$) were studied in \cite{BERG} and
\cite{PEAR} (see also \cite{HALMOS}), respectively. The first refinement
of inequality (\ref{opnuinq}) was obtained by Kittaneh \cite{KITTSM03}. We mention some results as below:
\begin{align}\label{kitt03}
w(A) \leq \frac{1}{2}(\|A\|+\|A^2\|^{1/2}).
\end{align}
In 2005, Kittaneh \cite{KITTSM05} further improved the inequality
(\ref{opnuinq}) as follows: For $A\in \mathcal{B}(\mathcal{H})$
\begin{align}\label{kitt05}
\frac{1}{4}\||A|^2+|A^*|^2\|\leq w^2(A)\leq \frac{1}{2} \||A|^2+|A^*|^2\|.
\end{align}
Further improvement of the upper bound of inequality (\ref{kitt05})
was provided by El-Haddad and Kittaneh \cite{HADDKITT} and they established
the following:\\
For $A\in \clb(\clh)$ and $r\geq 1$
\begin{align}{\label{hadkitt}}
w^r(A)\leq \frac{1}{2} \||A|^{2r\alpha}+|A^*|^{2r(1-\alpha)}\| \nonumber\\
\mbox{and}~~w^{2r}(A)\leq \|\alpha|A|^{2r} + (1-\alpha)|A^*|^{2r}\|,
\end{align}where $0\leq \alpha \leq 1$.
Further refinement of the upper bounds of both the inequalities
(\ref{kitt03}) and (\ref{kitt05}) was obtained by Abu-Omar and Kittaneh
\cite{OMARKITT} and their result is stated as below:
\begin{align}{\label{omkitt}}
w^2(A)\leq \frac{1}{4}\||A|^{2}+|A^*|^{2}\|+ \frac{1}{2}w(A^2).
\end{align}
A stronger version of the inequalities (\ref{kitt03}) and (\ref{kitt05})
for the upper bounds is recently obtained by Bhunia and Paul \cite{BHUKAL}:\\
For $A\in \mathcal{B}(\mathcal{H})$ and any $r\geq 1$
\begin{align}{\label{bhupaul}}
w^{2r}(A)\leq \frac{1}{4}\||A|^{2r}+|A^*|^{2r}\|+ \frac{1}{2}w(|A|^r|A^*|^r).
\end{align}
The above results show a continuous stream of research on the numerical radius inequalities and its recent improvements and
generalizations for a single operator $A\in \mathcal{B}(\mathcal{H})$.
If $A, B\in \mathcal{B}(\mathcal{H})$, then Holbrook \cite{HOLB}
proved that $w(AB)\leq 4w(A)w(B)$. In addition, if $A$ and $B$
commute, then $w(AB)\leq 2w(A)w(B)$, where the constant $2$ is
sharp. Also the following result is due to Fong and Holbrook
\cite{FONGHOLB} is worth mentioning:
\begin{align*}
w(AB+BA) \leq 2\sqrt{2}w(A)\|B\|.
\end{align*}
Kittaneh \cite{KITTSM05} established a generalized
numerical radius inequality: Let $A, B, C, D, S$, and
$T \in \mathcal{B}(\mathcal{H})$. Then
\begin{align}
w(ATB+CSD)\leq \frac{1}{2}\|A|T^*|^{2(1-\alpha)}A^*+
B^*|T|^{2\alpha}B + C|S^*|^{2(1-\alpha)}C^*+D^*|S|^{2\alpha}D\|
\end{align}
for $0 \leq \alpha \leq 1$.
In \cite{HIRJKITT}, and \cite{HIRJKITT2}, the authors computed
the numerical radius of product of operators, namely,
$w(A^*XA)$, $w(A^*XB\pm B^*YA)$ in terms of operator norm and
numerical radius of a block matrix of the form
$\begin{bmatrix}
O & P\\
Q & O
\end{bmatrix}$.
There are many recent research papers (e.g. \cite{OMARKITT},
\cite{BHUKAL}, \cite{DRAGO}, \cite{SABA}, \cite{YAMA} and references cited
therein) on the numerical radius inequalities with its
improvements and generalizations.

In this article, we attempt to unify as well as extend the above
numerical radius inequalities for a large class of operators. We
also establish some new numerical
radius inequalities with improvements of many known results. Further, we provide a general
upper bound for numerical radius of block matrices. The main guiding
tools used in this article are the various operator inequalities,
the notion of functional calculus, and the Orlicz function.
\vspace{.1cm}

The paper is organized as follows: In Section 2, we discuss
preliminaries and some important inequalities. Section 3 deals with
various numerical radius inequalities of Hilbert space operators using
Orlicz functions and Hermite-Hadamard inequality illustrated with some concrete examples on the Hardy space.
In Section 4, we discuss the upper bound of numerical radius for the off-diagonal
block matrix of operators.

\newsection{Preliminaries}

Throughout this paper, $\clh$ denotes a separable complex Hilbert space
and $\clb(\clh)$ is the algebra of all bounded linear operators from $\clh$
to itself. For any $A\in \mathcal{B}(\mathcal{H})$, $A^*$ denotes the
adjoint of $A$ and $|A|:=(A^*A)^{1 \over 2}$. We call an operator $A$ on a Hilbert space $\clh$ is an isometry if $A^*A=I$. \\
We say an operator $A\in \mathcal{B}(\mathcal{H})$ is positive if $A$ is self adjoint and
$\langle Ax, x \rangle \geq 0$ for all $x \in \clh$. The operator norm,
denoted by $\|A\|$, is defined as
\[
\|A\|=\sup\{\|Ax\|: ~~x\in \clh~\mbox{with}~\|x\|=1\}.
\]
We write the cartesian decomposition of $A$ by $A=\mathfrak{R}(A)+
i \mathfrak{I}(A)$, where $\mathfrak{R}(A)=\frac{1}{2}(A+A^*)$ and
$\mathfrak{I}(A)=\frac{1}{2i}(A-A^*)$. For any $A\in \mathcal{B}(\mathcal{H})$,
the numerical range of $A$, denoted by $W(A)$, is defined as
\[
W(A) :=\{\langle Ax,x \rangle : x\in \clh ,\|x\|=1 \}.
\]
Then the numerical radius of $A$, denoted by $w(A)$, is given by
\[
w(A)=\sup\{ |\langle Ax,x \rangle| : x\in \clh ,\|x\|=1 \}.
\]
Similarly, the Crawford number of $A$ is denoted by $c(A)$, and is defined as below:
\begin{center}
$c(A)=\inf\{ |\langle Ax,x \rangle| : x\in \clh ,\|x\|=1 \}.$
\end{center}

Before proceeding further, let us recall the notion of functional
calculus given in \cite{Conway-Book} and the Orlicz function. Let $A$ be a normal
operator on $\clh$ and $\sigma(A)$ denote the spectrum of $A$. Let
$B_{\infty}(\sigma(A))$ and $\mathcal{C}(\sigma(A))$ be
the bounded measurable complex-valued functions and
the continuous complex-valued functions on $\sigma(A)
\subseteq \mathbb{C}$, respectively. Clearly, $B_{\infty}(\sigma(A))$
is a $C^*$-algebra with the involution map defined by $f \mapsto \bar{f}$.
Let $\pi: \mathcal{C}(\sigma(A)) \rightarrow \clb(\clh)$ be a $*$-homomorphism
such that $\pi(1)= I_{\clh}$, where $1$ is a constant function with value one
and $I_{\clh}$ is an identity operator on $\clh$. Then there exists a unique spectral measure
$\mathcal{P}$ in $(\sigma(A), \clh)$ such that
\[
\pi(f) = \int_{\sigma(A)}fd\mathcal{P}.
\]
In particular, if $A \in \clb(\clh)$ is a positive operator, then
$\sigma(A)$ is a subset of $[0, \infty)$ and $f(A)$ is a positive
operator for any positive continuous function whose domain contains the
spectrum of $A$.\\

\NI
A map $\varphi: [0, \infty]\rightarrow [0, \infty]$ is said to be
an \emph{Orlicz function (non-degenerate)} if it is convex, continuous, non-decreasing,
with $\varphi(0)=0$, $\varphi(u)>0$ when $u>0$ and $\varphi(u)\rightarrow\infty$
as $u\rightarrow\infty$ (see Definition 4.a.1., \cite{LINDTZA}). An Orlicz function is said to be \emph{degenerate} if $\varphi(u)=0$ for some $u>0$. Throughout the paper wherever Orlicz function appears, we mean it \emph{non-degenerate}. Some concrete examples of Orlicz
functions are $(i)$ $\varphi(u)=u^p$, $p\geq 1$, $(ii)$ $\varphi_r(u)=e^{u^r}-1$, $1<r<\infty$
$(iii)$ $\varphi_p(u)=\frac{u^p}{\ln(e+u)}$, $p\geq 2$
and many more. An Orlicz function can be expressed
by the following integral representation:
\begin{center}
$\varphi(u)=\displaystyle\int_{0}^{u}p(t) dt$,
\end{center}where $p$ is a non-decreasing function such that $p(0)=0$, $p(t)>0$ for $t>0$, $\displaystyle\lim_{t\rightarrow\infty}p(t)=\infty$, and is known as kernel of $\varphi$. These restrictions exclude the case only when $\varphi(u)$ is equivalent to the function $u$. The right inverse $q$ of $p$ is defined as $q(s)=\sup\{t: p(t)\leq s\}$, $s\geq 0$. Then $q$ satisfy similar properties as $p$. Using it following complementary Orlicz function $\psi$ to
$\varphi$ is defined as below:
\begin{center}
$\psi(v)=\displaystyle\int_{0}^{v}q(s) ds$.
\end{center}
The pair $(\varphi, \psi)$ is called mutually complementary Orlicz functions.\\
The following inequalities are useful in the sequel. First, we recall the well known Hermite-Hadamard inequality \cite{HADAMARD} for a convex function $\varphi: I\subseteq \mathbb{R} \rightarrow\mathbb{R}$. Suppose that $a, b\in I$ such that $a<b$. Then Hermite-Hadamard inequality states that
\begin{align}{\label{HERMITEHAD}}
\varphi \Big(\frac{a+b}{2}\Big)&\leq \displaystyle\int_{0}^{1}\varphi(ta+(1-t)b)dt\leq \frac{\varphi(a)+\varphi(b)}{2},
\end{align}which is used to find the sharp inequalities.
\begin{Lemma}(See Theorem 1.2, \cite{PECA}){\label{opjensen}}
Let $A$ be any self-adjoint operator on a Hilbert space $\clh$
and $\varphi$ be a convex function such that the spectrum
of $A$ contained in $[0, \infty]$. Then the operator version
of Jensen inequality states that
\begin{center}
$\varphi(\langle Ax, x \rangle) \leq \langle \varphi(A)x, x \rangle \quad (\mbox{for every unit vector}~ x \in \clh)$.
\end{center}
\end{Lemma}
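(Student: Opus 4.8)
The plan is to deduce this operator inequality from the ordinary scalar Jensen inequality by passing through the spectral theorem, in the spirit of the functional calculus recalled above. First I would use the spectral decomposition of $A$: since $A$ is self-adjoint with $\sigma(A)\subseteq[0,\infty)$ compact, the functional calculus furnishes a spectral measure $\mathcal{P}$ supported on $\sigma(A)$ with $A=\int_{\sigma(A)}t\,d\mathcal{P}(t)$, and for every bounded Borel function $f$ on $\sigma(A)$ one has $f(A)=\int_{\sigma(A)}f\,d\mathcal{P}$. The function $\varphi$ is convex on $[0,\infty]$, hence continuous on the interior and in any case Borel, and it is bounded on the compact set $\sigma(A)$; thus $\varphi(A)$ is a well-defined (positive) operator.

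Next, fix a unit vector $x\in\clh$ and define $\mu_x(E):=\langle\mathcal{P}(E)x,x\rangle$ for Borel sets $E\subseteq\sigma(A)$. Because $\mathcal{P}$ is a spectral measure and $\|x\|=1$, $\mu_x$ is a Borel probability measure on $\sigma(A)$. Pairing the spectral representations of $A$ and of $\varphi(A)$ with the vector $x$ then gives
\[
\langle Ax,x\rangle=\int_{\sigma(A)}t\,d\mu_x(t),\qquad
\langle\varphi(A)x,x\rangle=\int_{\sigma(A)}\varphi(t)\,d\mu_x(t).
\]
In particular $\langle Ax,x\rangle$ lies in the closed convex hull of $\sigma(A)\subseteq[0,\infty]$, so it belongs to the domain of $\varphi$ and $\varphi(\langle Ax,x\rangle)$ is meaningful.

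Finally I would invoke the classical Jensen inequality: for the probability measure $\mu_x$ and the convex function $\varphi$,
\[
\varphi\!\left(\int_{\sigma(A)}t\,d\mu_x(t)\right)\le\int_{\sigma(A)}\varphi(t)\,d\mu_x(t).
\]
Substituting the two identities from the preceding step turns the left side into $\varphi(\langle Ax,x\rangle)$ and the right side into $\langle\varphi(A)x,x\rangle$, which is precisely the asserted inequality; since $x$ was an arbitrary unit vector, the proof is complete.

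The genuine content here is entirely carried by the spectral theorem; every subsequent manipulation is bookkeeping. The only points needing slight care are (i) checking that $\varphi$ restricted to $\sigma(A)$ is a bounded Borel function so that $\varphi(A)$ is legitimately given by the functional calculus — which follows from convexity (hence continuity on the interior of $[0,\infty]$) together with compactness of the spectrum — and (ii) noting that $\langle Ax,x\rangle$ never leaves the domain of $\varphi$, since it is a $\mu_x$-average of points of $\sigma(A)\subseteq[0,\infty]$. Neither is a real obstacle, so the statement reduces cleanly to the observation that a diagonal matrix coefficient of the spectral measure is an expectation with respect to a probability measure.
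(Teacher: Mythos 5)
Your argument is correct: the paper offers no proof of this lemma (it is quoted verbatim from Theorem 1.2 of the Mond--Pe\v{c}ari\'{c} monograph \cite{PECA}), and your route --- writing $\mu_x(E)=\langle\mathcal{P}(E)x,x\rangle$ as a probability measure, identifying $\langle Ax,x\rangle$ and $\langle\varphi(A)x,x\rangle$ as the $\mu_x$-integrals of $t$ and $\varphi(t)$, and applying the scalar Jensen inequality --- is precisely the standard proof given in that reference. The two points you flag (Borel measurability/boundedness of $\varphi$ on the compact spectrum, and $\langle Ax,x\rangle$ lying in the convex hull of $\sigma(A)$) are indeed the only details requiring care, and both are handled adequately.
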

An easy consequence of Lemma \ref{opjensen} is the H\"{o}lder-McCarthy inequality, which is stated as below:
\begin{Lemma}(See Theorem 1.4, \cite{PECA}){\label{mccarthy}}
Let $A$ be any positive operator on a Hilbert space $\clh$ and $r \geq 1$
be any real number. Then for any unit vector $x \in \clh$,
\begin{center}
$\langle Ax, x \rangle^r \leq \langle A^{r}x, x \rangle$.
\end{center}
\end{Lemma}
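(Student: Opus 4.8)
The plan is to obtain the inequality as an immediate specialization of the operator Jensen inequality in Lemma~\ref{opjensen}. First I would record that a positive operator $A$ is in particular self-adjoint with $\sigma(A)\subseteq[0,\infty)$, and that for each fixed $r\ge 1$ the scalar function $\varphi(t)=t^{r}$ is convex, continuous and non-negative on $[0,\infty]$; hence the pair $(A,\varphi)$ satisfies the hypotheses of Lemma~\ref{opjensen}. I would also note that, via the functional calculus attached to the spectral measure $\mathcal P$ of $A$, one has $\varphi(A)=A^{r}$ (the usual positive $r$-th power of $A$), and that for a unit vector $x$ the number $\langle Ax,x\rangle$ is non-negative, so it lies in the domain of $\varphi$.

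With these observations in place, applying Lemma~\ref{opjensen} with this choice of $\varphi$ gives, for every unit vector $x\in\clh$,
\[
\langle Ax,x\rangle^{r}=\varphi\bigl(\langle Ax,x\rangle\bigr)\le\langle\varphi(A)x,x\rangle=\langle A^{r}x,x\rangle,
\]
which is exactly the asserted H\"{o}lder-McCarthy inequality.

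For readers who prefer a self-contained route, I would add the alternative argument through the spectral theorem directly: write $A=\int_{\sigma(A)}t\,d\mathcal P(t)$, so that for $\|x\|=1$ the set function $\mu_{x}(S)=\langle\mathcal P(S)x,x\rangle$ is a probability measure supported on $\sigma(A)\subseteq[0,\infty)$; then $\langle Ax,x\rangle=\int t\,d\mu_{x}(t)$ and $\langle A^{r}x,x\rangle=\int t^{r}\,d\mu_{x}(t)$, and the claim reduces to the classical Jensen inequality $\bigl(\int t\,d\mu_{x}\bigr)^{r}\le\int t^{r}\,d\mu_{x}$ for the convex function $t\mapsto t^{r}$ on $[0,\infty)$. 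Since the whole argument rests only on convexity of $t\mapsto t^{r}$, there is no genuine obstacle; the only point that deserves an explicit sentence is the identification $\varphi(A)=A^{r}$ together with the observation that $\langle Ax,x\rangle$ lies in the domain of $\varphi$, both guaranteed by the positivity of $A$.
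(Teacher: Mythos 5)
Your proposal is correct and follows exactly the route the paper indicates: the paper introduces this lemma with the remark that it is ``an easy consequence of Lemma~\ref{opjensen}'' (applied to the convex function $t\mapsto t^{r}$), which is precisely your main argument. The additional spectral-measure derivation is a sound but optional elaboration of the same idea.
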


\begin{Lemma} \emph{(}\cite{KITTNOTES}, Theorem 1\emph{)}{\label{kitt88}}
Let $A\in \clb(\clh)$, $f$, $g$ be non-negative continuous functions on $[0, \infty]$ such that $f(t)g(t)=t$ for all $t\in [0, \infty]$. Then
\begin{center}
$|\langle Ax, y \rangle| \leq \|f(|A|)x\|\|g(|A^*|)y\|$ holds
\end{center}for all $x, y\in \clh$.
\end{Lemma}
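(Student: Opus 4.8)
The plan is to obtain the inequality from the polar decomposition of $A$, functional calculus, and a single Cauchy--Schwarz estimate; the only delicate ingredient is an intertwining identity for the partial isometry appearing in the polar decomposition, and that is where the real work lies.

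First I would write $A=U|A|$ with $U$ the partial isometry from the polar decomposition, so that $U$ is a contraction and $U^*U$ is the orthogonal projection onto $(\ker A)^\perp\supseteq\operatorname{ran}|A|$; in particular $U^*U|A|=|A|$. Then $\bigl(U|A|U^*\bigr)^2=U|A|(U^*U)|A|U^*=U|A|^2U^*=AA^*$, so by uniqueness of the positive square root $|A^*|=U|A|U^*$, and hence $|A^*|U=U|A|$. Iterating gives $|A^*|^nU=U|A|^n$ for all $n\ge 0$, so $p(|A^*|)U=U\,p(|A|)$ for every polynomial $p$. Since $\sigma(|A|)$ and $\sigma(|A^*|)$ lie in the compact interval $[0,\|A\|]$, approximating $g$ uniformly there by polynomials and passing to the limit yields the key relation
\[
g(|A^*|)\,U=U\,g(|A|).
\]
The value $g(0)$ causes no trouble, because the polynomial identity already covers constants; what does require care is that $U$ is only a partial isometry, so one cannot simply assert $g(|A^*|)=U g(|A|)U^*$ without the approximation argument.

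Next, the hypothesis $f(t)g(t)=t$ gives, via functional calculus, $f(|A|)g(|A|)=|A|$, where $f(|A|)$ and $g(|A|)$ are commuting positive operators. Then for $x,y\in\clh$ I would compute
\[
\langle Ax,y\rangle=\langle U|A|x,y\rangle=\langle U\,g(|A|)f(|A|)x,\,y\rangle=\langle g(|A^*|)\,U f(|A|)x,\,y\rangle=\bigl\langle U f(|A|)x,\;g(|A^*|)y\bigr\rangle,
\]
using the intertwining relation in the third step and self-adjointness of $g(|A^*|)$ in the last. Cauchy--Schwarz, followed by $\|U f(|A|)x\|\le\|f(|A|)x\|$, then gives $|\langle Ax,y\rangle|\le\|f(|A|)x\|\,\|g(|A^*|)y\|$, which is the asserted inequality.

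The hard part will be exactly the intertwining identity $g(|A^*|)U=U g(|A|)$: everything hinges on transporting the functional calculus of $|A|$ across $U$ when $U$ need not be unitary, and on handling the partial-isometry bookkeeping (kernels, ranges, the value at $0$) carefully. Once that is in place the remainder is a one-line computation; as a reality check, taking $f(t)=g(t)=\sqrt t$ recovers the familiar bound $|\langle Ax,y\rangle|\le\||A|^{1/2}x\|\,\||A^*|^{1/2}y\|$.
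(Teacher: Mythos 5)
Your proof is correct. The paper itself gives no proof of this lemma --- it is quoted verbatim from Kittaneh's 1988 paper (Theorem 1 of \cite{KITTNOTES}) --- and your argument via the polar decomposition $A=U|A|$, the intertwining relation $g(|A^*|)U=Ug(|A|)$ obtained from $|A^*|=U|A|U^*$ and polynomial approximation, followed by Cauchy--Schwarz and $\|U\|\le 1$, is essentially the standard (indeed Kittaneh's original) proof of this result.
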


In particular, if $f(t)=t^\alpha$ and $g(t)=t^{2(1-\alpha)}$ for $0\leq \alpha \leq 1$,
then from Lemma \ref{kitt88} one gets the following special form.
\begin{Lemma}{\label{mixedcs}}\cite{KATO}
Let $A$ be any bounded linear operator on a Hilbert space $\clh$.
Then for $x, y\in \clh$
\begin{center}
$|\langle Ax, y \rangle|^2 \leq \langle |A|^{2\alpha}x, x \rangle
\langle |{A^*}|^{2(1-\alpha)}y, y \rangle$,
\end{center}
where $0\leq \alpha \leq 1$.
\end{Lemma}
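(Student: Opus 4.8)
The plan is to obtain this inequality as a direct specialization of Lemma \ref{kitt88}. First I would take the two non-negative continuous functions $f(t) = t^{\alpha}$ and $g(t) = t^{1-\alpha}$ on $[0,\infty]$ (with the convention that $t^{0}$ is the constant function $1$, which covers the endpoint values of $\alpha$); since $0 \le \alpha \le 1$ these are non-negative and continuous, and they satisfy the required factorization $f(t)g(t) = t^{\alpha}\,t^{1-\alpha} = t$ for every $t \in [0,\infty]$. Feeding this pair into Lemma \ref{kitt88} immediately yields, for all $x, y \in \clh$,
\[
|\langle Ax, y\rangle| \;\le\; \|f(|A|)x\|\,\|g(|A^*|)y\| \;=\; \||A|^{\alpha}x\|\,\||A^*|^{1-\alpha}y\|.
\]

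Next I would square both sides and convert the norms into inner products. By the functional calculus recalled above, $|A|$ being a positive operator forces $|A|^{\alpha}$ to be positive, hence self-adjoint, with $\bigl(|A|^{\alpha}\bigr)^{2} = |A|^{2\alpha}$; therefore
\[
\||A|^{\alpha}x\|^{2} = \langle |A|^{\alpha}x,\, |A|^{\alpha}x\rangle = \langle |A|^{2\alpha}x,\, x\rangle,
\]
and the identical computation applied to the positive operator $|A^*|^{1-\alpha}$ gives $\||A^*|^{1-\alpha}y\|^{2} = \langle |A^*|^{2(1-\alpha)}y,\, y\rangle$. Substituting these two identities into the squared form of the displayed inequality produces exactly
\[
|\langle Ax, y\rangle|^{2} \;\le\; \langle |A|^{2\alpha}x, x\rangle\,\langle |A^*|^{2(1-\alpha)}y, y\rangle,
\]
which is the assertion.

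Since every step is mechanical, I do not anticipate a genuine obstacle; the only matters deserving a second glance are the endpoint cases $\alpha \in \{0, 1\}$, where one of $f, g$ degenerates to the constant $1$ and the corresponding operator becomes the identity (so the estimate collapses to the usual Cauchy--Schwarz bound $|\langle Ax, y\rangle|^{2} \le \langle |A|^{2}x, x\rangle\|y\|^{2}$ or its mirror image), together with the routine check, already guaranteed by the functional calculus, that a real power of a positive operator is again positive and self-adjoint.
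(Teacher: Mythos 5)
Your proof is correct and follows exactly the route the paper indicates: specializing Lemma~\ref{kitt88} to $f(t)=t^{\alpha}$, $g(t)=t^{1-\alpha}$ and then squaring, with the norms rewritten as inner products via the functional calculus. Note that the paper's own remark preceding the lemma contains a typo (it writes $g(t)=t^{2(1-\alpha)}$, which would violate the requirement $f(t)g(t)=t$); your choice $g(t)=t^{1-\alpha}$ is the correct one.
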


\begin{Lemma}{\label{buzano}}\cite{BUZANO}
Suppose that $x, y, e \in \clh$ such that $\|e\|=1$. Then the following holds:
\begin{center}
$|\langle x, e \rangle \langle e, y \rangle| \leq \frac{1}{2}(\|x\|\|y\|+|\langle x, y \rangle|)$.
\end{center}
\end{Lemma}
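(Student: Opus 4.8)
The plan is to reduce Buzano's inequality to the ordinary Cauchy--Schwarz inequality by introducing a well-chosen auxiliary vector. Since $\|e\|=1$, the map $z\mapsto \langle z,e\rangle e$ is the orthogonal projection of $\clh$ onto $\mathbb{C}e$, and the natural object to look at is the reflection of $x$ in the line $\mathbb{C}e$, namely
$u:=2\langle x,e\rangle e-x$. The whole proof then amounts to computing $\|u\|$ and $\langle u,y\rangle$.

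First I would compute $\|u\|^{2}$. Expanding $\langle 2\langle x,e\rangle e-x,\ 2\langle x,e\rangle e-x\rangle$ and using $\|e\|=1$, the three terms involving $\langle x,e\rangle$ contribute $(4-2-2)|\langle x,e\rangle|^{2}=0$, so that $\|u\|^{2}=\|x\|^{2}$, i.e. $\|u\|=\|x\|$. This is the one and only place where the hypothesis $\|e\|=1$ is used.

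Next I would record the pairing $\langle u,y\rangle = 2\langle x,e\rangle\langle e,y\rangle-\langle x,y\rangle$, which is immediate from (conjugate-)linearity of the inner product. Applying Cauchy--Schwarz to $\langle u,y\rangle$ and substituting $\|u\|=\|x\|$ gives $\bigl|2\langle x,e\rangle\langle e,y\rangle-\langle x,y\rangle\bigr|\leq \|x\|\,\|y\|$. Finally the triangle inequality yields $2\,|\langle x,e\rangle\langle e,y\rangle|\leq |\langle x,y\rangle|+\|x\|\,\|y\|$, and dividing by $2$ is the desired conclusion.

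There is essentially no hard step here, since the statement is the classical Buzano inequality; the only mildly non-obvious point is spotting the auxiliary vector $u=2\langle x,e\rangle e-x$, after which everything is a one-line computation. Equivalently, one can phrase the same argument operator-theoretically: $U:=2\,e\otimes e-I$ (with $e\otimes e:z\mapsto\langle z,e\rangle e$) is a self-adjoint unitary, so $\|Ux\|=\|x\|$ and $\langle Ux,y\rangle=2\langle x,e\rangle\langle e,y\rangle-\langle x,y\rangle$, and Cauchy--Schwarz plus the triangle inequality finish it exactly as above. I would present the elementary vector computation, as it is the shortest.
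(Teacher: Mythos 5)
Your argument is correct. The key computation checks out: with $u=2\langle x,e\rangle e-x$ and $\|e\|=1$, expanding $\|u\|^{2}$ gives $4|\langle x,e\rangle|^{2}-2|\langle x,e\rangle|^{2}-2|\langle x,e\rangle|^{2}+\|x\|^{2}=\|x\|^{2}$, and $\langle u,y\rangle=2\langle x,e\rangle\langle e,y\rangle-\langle x,y\rangle$, so Cauchy--Schwarz followed by the triangle inequality delivers exactly $2|\langle x,e\rangle\langle e,y\rangle|\leq\|x\|\|y\|+|\langle x,y\rangle|$. Note, however, that the paper gives no proof of this lemma at all --- it is simply quoted from Buzano's 1971/73 paper --- so there is no in-paper argument to compare against. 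Your reflection-in-$\mathbb{C}e$ trick (equivalently, the self-adjoint unitary $2\,e\otimes e-I$) is the standard modern derivation and is a worthwhile self-contained addition; it also makes transparent where the normalization $\|e\|=1$ enters and why equality analysis reduces to the equality case of Cauchy--Schwarz for the reflected vector.
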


\begin{Lemma}\cite{LINDTZA}(Young's inequality){\label{young}}
Let $\varphi$, and $\psi$ be two complementary Orlicz functions. Then \\
$(i)$ for $u, v\geq 0$, $uv\leq \varphi(u)+\psi(v)$, and\\
$(ii)$ for $u\geq 0$, $up(u)=\varphi(u)+\psi(p(u))$ (equality condition).
\end{Lemma}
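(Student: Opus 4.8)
The plan is to prove Young's inequality by the classical geometric argument resting on the integral representations recalled above, namely $\varphi(u)=\int_0^u p(t)\,dt$ and $\psi(v)=\int_0^v q(s)\,ds$, where $q(s)=\sup\{t:p(t)\le s\}$ is the right inverse of the non-decreasing kernel $p$.

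First I would fix $u,v\ge 0$ and work in the first quadrant of the $(t,s)$-plane, comparing the rectangle $R=[0,u]\times[0,v]$, of area $uv$, with the two regions
\[
G_\varphi=\{(t,s):0\le t\le u,\ 0\le s\le p(t)\},\qquad
G_\psi=\{(t,s):0\le s\le v,\ 0\le t\le q(s)\},
\]
whose planar Lebesgue measures, by Tonelli's theorem, are exactly $\varphi(u)$ and $\psi(v)$. The key step is the inclusion $R\subseteq G_\varphi\cup G_\psi$: for any $(t,s)\in R$, either $s\le p(t)$, so $(t,s)\in G_\varphi$; or $s>p(t)$, and then monotonicity of $p$ yields $p(t')\le p(t)<s$ for every $t'\le t$, whence $t\le\sup\{t':p(t')\le s\}=q(s)$, i.e. $(t,s)\in G_\psi$. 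Passing to measures gives
\[
uv=|R|\le|G_\varphi|+|G_\psi|=\varphi(u)+\psi(v),
\]
which is part (i).

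For part (ii) I would take $v=p(u)$. Then $0\le t\le u$ forces $0\le s\le p(t)\le p(u)=v$, so $G_\varphi\subseteq R$; combined with the inclusion of the previous step this shows $R=G_\varphi\cup(G_\psi\cap R)$. It therefore suffices to check that this union is disjoint up to a Lebesgue-null set and that $|G_\psi\cap R|=\psi(p(u))$ — after which $u\,p(u)=|R|=|G_\varphi|+|G_\psi\cap R|=\varphi(u)+\psi(p(u))$. Both points rest on the fact that $q$ is defined as a supremum, so that the ``generalized graph'' of $p$ separates the first quadrant cleanly: where $p$ is strictly increasing, $G_\varphi$ and $G_\psi$ meet only along the curve $s=p(t)$; where $p$ is constant on an interval, $q$ has a jump and $G_\psi$ fills exactly the vertical strip missed by $G_\varphi$; and where $p$ has a jump, $q$ is constant and the only part of $G_\psi$ lying beyond $t=u$ sits on the single horizontal line $s=p(u)$, hence is null. (An analytic substitute: $\psi(v)-\psi(p(u))=\int_{p(u)}^{v}q(s)\,ds\ge u(v-p(u))$ for $v\ge p(u)$, since $q(s)\ge q(p(u))\ge u$ there; the mirror inequality — obtained by interchanging the roles of $(\varphi,p)$ and $(\psi,q)$, legitimate because the pair is mutually complementary — handles $v\le p(u)$, and equality in (i) is pinned to $v=p(u)$.)

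Inequality (i) is routine once the area picture is in place; the genuinely delicate point, and hence the main obstacle, is the equality assertion (ii): one must be careful that the generalized inverse $q(s)=\sup\{t:p(t)\le s\}$ makes the decomposition $R=G_\varphi\sqcup(G_\psi\cap R)$ exact up to a null set even when $p$ fails to be a homeomorphism of $[0,\infty)$, i.e. has flat pieces or jumps.
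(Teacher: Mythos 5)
The paper gives no proof of this lemma: it is quoted verbatim from Lindenstrauss--Tzafriri \cite{LINDTZA} as a known fact, so there is no in-paper argument to compare against. Your geometric proof is the classical one (and essentially the one found in the standard Orlicz-space references): identifying $\varphi(u)$ and $\psi(v)$ with the areas of the regions below the graph of $p$ and to the left of its generalized graph, proving $R\subseteq G_\varphi\cup G_\psi$ for part (i), and checking that at $v=p(u)$ the decomposition of the rectangle is exact up to null sets for part (ii). Your handling of the degenerate cases (flat pieces and jumps of $p$, where $q$ respectively jumps or is flat) is the genuinely delicate point and you address it correctly; in particular the observation that the portion of $G_\psi$ with $t>u$ lies on the single line $s=p(u)$ is what makes $|G_\psi\cap R|=\psi(p(u))$.

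One small caveat: the parenthetical ``analytic substitute'' does not by itself prove (ii). The inequality $\psi(v)-\psi(p(u))\ge u(v-p(u))$ for $v\ge p(u)$, together with its mirror for $v\le p(u)$, only shows that $v\mapsto \varphi(u)+\psi(v)-uv$ attains its minimum at $v=p(u)$; it does not show that the minimum value is zero, which is the content of (ii). So the area decomposition is not merely a convenience but is carrying the equality statement; keep it as the primary argument and present the analytic remark only as an addendum explaining why equality in (i) forces $v=p(u)$.
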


\begin{Lemma}{\label{lembohr}}
Let $\varphi$ be an Orlicz function and suppose that $a_i\geq 0$ for $i=1, 2,\ldots, n$. Then
$\varphi(\displaystyle\sum_{i=1}^{n}\frac{a_i}{n})\leq \frac{1}{n}\sum_{i=1}^{n}\varphi(a_i).$
\end{Lemma}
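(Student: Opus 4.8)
The statement is exactly a discrete Jensen inequality for the convex function $\varphi$ with equal weights $1/n$, so the plan is to extract it from convexity by induction on $n$. First I would record the base case $n=2$: by the very definition of convexity of $\varphi$ on $[0,\infty]$, for any $a_1,a_2\ge 0$ one has $\varphi\!\left(\tfrac{a_1+a_2}{2}\right)\le \tfrac12\varphi(a_1)+\tfrac12\varphi(a_2)$, which is the claim for $n=2$ (the case $n=1$ being trivial).

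For the inductive step, assume the inequality holds for $n-1$ nonnegative reals. Given $a_1,\dots,a_n\ge 0$, write
\begin{align*}
\frac{1}{n}\sum_{i=1}^{n}a_i=\frac{1}{n}\,a_n+\frac{n-1}{n}\left(\frac{1}{n-1}\sum_{i=1}^{n-1}a_i\right),
\end{align*}
which exhibits $\tfrac1n\sum_{i=1}^n a_i$ as a convex combination (weights $\tfrac1n$ and $\tfrac{n-1}{n}$) of $a_n$ and the average $b:=\tfrac{1}{n-1}\sum_{i=1}^{n-1}a_i$. Applying convexity of $\varphi$ to this combination gives
\begin{align*}
\varphi\!\left(\frac{1}{n}\sum_{i=1}^{n}a_i\right)\le \frac{1}{n}\varphi(a_n)+\frac{n-1}{n}\,\varphi(b),
\end{align*}
and then the induction hypothesis applied to $a_1,\dots,a_{n-1}$ yields $\varphi(b)\le \tfrac{1}{n-1}\sum_{i=1}^{n-1}\varphi(a_i)$. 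Substituting this bound and simplifying produces $\varphi\!\left(\tfrac1n\sum_{i=1}^n a_i\right)\le \tfrac1n\sum_{i=1}^{n}\varphi(a_i)$, completing the induction.

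There is essentially no serious obstacle here; the only point worth a remark is that $\varphi$ is allowed to take the value $+\infty$, but since the $a_i$ are finite nonnegative numbers and $\varphi$ is non-decreasing and convex on $[0,\infty]$, the convexity inequality continues to hold in $[0,\infty]$ with the usual conventions, so the argument goes through verbatim. Alternatively, one could invoke the operator Jensen inequality of Lemma \ref{opjensen} applied to a diagonal operator $A=\mathrm{diag}(a_1,\dots,a_n)$ on $\mathbb{C}^n$ with the unit vector $x=\tfrac{1}{\sqrt n}(1,\dots,1)$, for which $\langle Ax,x\rangle=\tfrac1n\sum a_i$ and $\langle \varphi(A)x,x\rangle=\tfrac1n\sum \varphi(a_i)$; I would likely present the elementary induction as the primary argument and mention this as an immediate alternative.
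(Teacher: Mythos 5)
Your proof is correct, but it takes a genuinely different route from the paper's. The paper does not run the induction at all: it uses the integral representation $\varphi(u)=\int_0^u p(t)\,dt$ with non-decreasing kernel $p$ to establish the homogeneity-type property $\varphi(\alpha u)\le \alpha\varphi(u)$ for $\alpha\in[0,1]$, and then simply asserts that the lemma ``follows by choosing $\alpha$ and $u$ suitably,'' leaving the actual deduction implicit. Note that the obvious one-step application of that property, namely $\varphi\bigl(\tfrac1n\sum a_i\bigr)\le\tfrac1n\varphi\bigl(\sum a_i\bigr)$, does \emph{not} finish the argument, since a convex function with $\varphi(0)=0$ is superadditive rather than subadditive; one still needs a Jensen-type step, which is exactly what your induction supplies. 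Your argument is therefore the more complete and more elementary one: it uses only convexity (the case $n=2$ plus the convex-combination decomposition $\tfrac1n\sum_{i=1}^n a_i=\tfrac1n a_n+\tfrac{n-1}{n}b$), needs neither the integral representation nor the kernel $p$, and in fact never uses $\varphi(0)=0$. What the paper's approach buys instead is the auxiliary inequality $\varphi(\alpha u)\le\alpha\varphi(u)$ itself, which is quoted and reused later in the paper (e.g.\ in the remarks on contractions $X$ with $\|X\|\le 1$), so recording it inside this proof serves a purpose beyond the lemma. Your closing remark about the value $+\infty$ and the alternative via Lemma \ref{opjensen} with a diagonal operator are both sound.
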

\begin{proof}
Let $\alpha\in [0, 1]$ and $\varphi$ be an Orlicz function. Then using non-decreasing property of the kernel function $p(t)$, we have the following:
\begin{align*}
\varphi(\alpha u)&=\displaystyle\int_{0}^{\alpha u}p(t) dt = \alpha \displaystyle\int_{0}^{u}p(\alpha v) dv\leq \alpha \displaystyle\int_{0}^{u}p(v) dv=\alpha \varphi(u),
\end{align*}which is an important and well-known property of $\varphi$ (see \cite{LINDTZA}) and we have recalled it here for the sake of completeness. Hence the desired inequality follows by choosing $\alpha$ and $u$ suitably.
\end{proof}

\begin{rem} (see \cite{VASIKEC})
An easy consequence of Lemma \ref{lembohr} is the Bohr's inequality, which says that for $a_i\geq 0$ for $i=1, 2,\ldots, n$ and $\varphi(t)=t^r$, $r\geq 1$ we have $$(\displaystyle\sum_{i=1}^{n}\frac{a_i}{n})^r\leq \frac{1}{n}\sum_{i=1}^{n}a_i^r.$$
\end{rem}

\section{Main Results}

In this section, we unify the earlier results on numerical radius inequalities
as well as extend the results for a larger class of Hilbert space operators
using Orlicz function and functional calculus. This also yields simple proofs of
many of the earlier results. Let us begin with the following result.
\begin{thm}{\label{strngkit}}
Suppose that $A\in \clb(\clh)$. Then for $0\leq \alpha, t \leq 1$ and any Orlicz function $\varphi$
$$\varphi(w(A))\leq \Big\|\displaystyle\int_{0}^{1}\varphi(t|A|^{2\alpha}+(1-t)|A^*|^{2(1-\alpha)})dt\Big\|\leq\frac{1}{2}\|\varphi(|A|^{2\alpha}) + \varphi(|A^*|^{2(1-\alpha)})\|.$$
\end{thm}
\begin{proof}
Choose $x\in \clh$ such that $\|x\|=1$. Given $\alpha, t\in [0, 1]$. By using \mbox{Lemma}~ \ref{opjensen}, \mbox{Lemma}~ \ref{mixedcs}, \emph{AM-GM} inequality, and inequality (\ref{HERMITEHAD}) we have the following:
\begin{align*}
\varphi(|\langle Ax, x \rangle|)
&\leq \varphi(\langle |A|^{2\alpha}x, x \rangle^{\frac{1}{2}} \langle |A^*|^{2(1-\alpha)}x, x \rangle^{\frac{1}{2}})
\leq \varphi\Big(\frac{\langle |A|^{2\alpha}x, x \rangle+\langle |A^*|^{2(1-\alpha)}x, x \rangle}{2}\Big)\\
& \leq \displaystyle\int_{0}^{1}\varphi(t\langle|A|^{2\alpha} x, x\rangle +(1-t)\langle|A^*|^{2(1-\alpha)}x, x\rangle)dt\\
&= \displaystyle\int_{0}^{1}\varphi\langle (t|A|^{2\alpha}+(1-t)|A^*|^{2(1-\alpha)})x, x\rangle dt\\
& \leq \displaystyle\int_{0}^{1}\langle \varphi(t|A|^{2\alpha}+(1-t)|A^*|^{2(1-\alpha)})x, x\rangle dt\\
& = \Big\langle \displaystyle\int_{0}^{1}\varphi(t|A|^{2\alpha}+(1-t)|A^*|^{2(1-\alpha)}) dt~~ x, x\Big \rangle.
\end{align*}
Since $\varphi$ is convex, so we have $\varphi(t|A|^{2\alpha}+(1-t)|A^*|^{2(1-\alpha)})\leq t\varphi(|A|^{2\alpha})+(1-t)\varphi(|A^*|^{2(1-\alpha)})$. Therefore
\begin{align*}
\varphi(|\langle Ax, x \rangle|) & \leq \Big\langle \displaystyle\int_{0}^{1}\varphi(t|A|^{2\alpha}+(1-t)|A^*|^{2(1-\alpha)}) dt~~ x, x\Big \rangle & \leq \frac{1}{2}\langle \{\varphi(|A|^{2\alpha})+ \varphi(|A^*|^{2(1-\alpha)})\}x, x \rangle.
\end{align*}Taking the supremum over $\|x\|=1$, we get the desired inequality.
\end{proof}
An immediate consequence of the above result is an improvement of the Haddad and Kittaneh inequality \cite{HADDKITT}.
\begin{cor}Suppose $A\in \clb(\clh)$, $\varphi(t)=t^r$ for $t\geq 0$ and $r\geq 1$.
Then for $0\leq \alpha\leq 1$ $$w^r(A)\leq \Big\|\displaystyle\int_{0}^{1}\Big(t|A|^{2\alpha}+(1-t)|A^*|^{2(1-\alpha)}\Big)^r dt\Big\|\leq \frac{1}{2} \||A|^{2r\alpha}+|A^*|^{2r(1-\alpha)}\|.$$
\end{cor}
\begin{rem}
The above findings are stronger versions of celebrated Kittaneh inequalities (see \cite{HADDKITT}, \cite{KITTSM03}, and \cite{KITTSM05}) as well as results of Sababheh and Moradi \cite{SABA}.
\end{rem}

\begin{rem}
Suppose that $A$ is an isometry on a Hilbert space $\clh$ and $\varphi(t)=t^r$, $r\geq 1$. Then from the above result we have $1=\varphi(1)\leq \frac{1}{2}\|I+P\|\leq 1$, that is $\|I+P\|=2$, where $P$ is the orthogonal projection on the range of $A$.
\end{rem}

\begin{rem}
Let $H^2(\mathbb{D})$ be the Hardy space over the unit disc $\mathbb{D}$ and $\theta$ be any inner function on $\mathbb{D}$. Then $T_\theta$ is an analytic Toeplitz operator on $H^2(\mathbb{D})$. Now we have from our result $w(T_\theta)=1$ (see \cite{MAJEE}). Then for $\varphi(t)=t^r$, $r\geq 1$, we get $\|I+P_{\theta H^2(\mathbb{D})}\|=2$, where $P_{\theta H^2(\mathbb{D})}$ is the orthogonal projection onto the range of $T_\theta$, that is $\theta H^2(\mathbb{D})$.
In a similar way, in case of Crawford number, we have $\varphi(c(A))\leq \frac{1}{2}\|\varphi(|A|^{2\alpha}) + \varphi(|A^*|^{2(1-\alpha)})\|$, $0\leq \alpha \leq 1$. In this case, $c(T_\theta)=0$ (see \cite{MAJEE}), hence $\varphi(c(T_\theta))=0$, for any Orlicz function. Then for $\varphi(t)=t^r$, $r\geq 1$, we get $0\leq \|I+P_{\theta H^2(\mathbb{D})}\|\leq 2$.
\end{rem}

\begin{thm}
Let $A\in \clb(\clh)$ and $\varphi$ be an Orlicz function. Then
\[w^2(A)\leq \|\varphi(|A|)+\psi(|A^*|)\|,\]
\end{thm}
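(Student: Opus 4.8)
The plan is to bound $|\langle Ax,x\rangle|^2$ for an arbitrary unit vector $x\in\clh$ and then take the supremum over such $x$. First I would invoke Lemma \ref{mixedcs} with $y=x$ and the symmetric choice $\alpha=\frac{1}{2}$, which gives
\[
|\langle Ax,x\rangle|^2\leq \langle |A|x,x\rangle\,\langle |A^*|x,x\rangle .
\]
Since $|A|$ and $|A^*|$ are positive operators, the numbers $u:=\langle|A|x,x\rangle$ and $v:=\langle|A^*|x,x\rangle$ are non-negative, so Young's inequality for the mutually complementary pair $(\varphi,\psi)$ (Lemma \ref{young}$(i)$) applies and yields
\[
|\langle Ax,x\rangle|^2\leq \varphi(\langle|A|x,x\rangle)+\psi(\langle|A^*|x,x\rangle).
\]

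The next step is to pull the scalar functions $\varphi$ and $\psi$ inside the inner products by means of the operator Jensen inequality. The operator $|A|$ is self-adjoint with spectrum contained in $[0,\infty)$ and $\varphi$ is convex, so Lemma \ref{opjensen} gives $\varphi(\langle|A|x,x\rangle)\leq\langle\varphi(|A|)x,x\rangle$. The complementary function $\psi$ is again a convex, continuous, non-decreasing Orlicz function (its kernel $q$ is non-decreasing), so the same lemma applied to the positive operator $|A^*|$ gives $\psi(\langle|A^*|x,x\rangle)\leq\langle\psi(|A^*|)x,x\rangle$. Combining the displays above,
\[
|\langle Ax,x\rangle|^2\leq \langle\varphi(|A|)x,x\rangle+\langle\psi(|A^*|)x,x\rangle=\langle(\varphi(|A|)+\psi(|A^*|))x,x\rangle .
\]

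Finally, $\varphi(|A|)+\psi(|A^*|)$ is a sum of two positive operators, hence positive, so $\langle(\varphi(|A|)+\psi(|A^*|))x,x\rangle\leq\|\varphi(|A|)+\psi(|A^*|)\|$ for every unit vector $x$; taking the supremum over such $x$ and using $w^2(A)=\sup_{\|x\|=1}|\langle Ax,x\rangle|^2$ gives the assertion. I do not anticipate any serious obstacle; the only points meriting a sentence of justification are that $\varphi(|A|)$ and $\psi(|A^*|)$ are genuine bounded operators (the spectra of $|A|$ and $|A^*|$ are compact subsets of $[0,\infty)$ on which $\varphi$ and $\psi$ are continuous, so the functional calculus of Section 2 applies) and that Lemma \ref{opjensen} is legitimately invoked for $\psi$ and not only for $\varphi$. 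As a sanity check, the choice $\varphi(u)=u^{2}/2$ forces $\psi(v)=v^{2}/2$, and the theorem then collapses to Kittaneh's upper bound $w^{2}(A)\leq\frac{1}{2}\||A|^{2}+|A^{*}|^{2}\|$ in (\ref{kitt05}).
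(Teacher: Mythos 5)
Your proof is correct and follows essentially the same route as the paper: Lemma \ref{mixedcs} with $\alpha=\tfrac12$, then Young's inequality for the complementary pair $(\varphi,\psi)$, then the operator Jensen inequality, and finally the passage to the operator norm (the paper routes this last step through $w(\varphi(|A|)+\psi(|A^*|))\leq\|\varphi(|A|)+\psi(|A^*|)\|$, which is the same estimate you obtain from positivity). Your added remarks on why Lemma \ref{opjensen} applies to $\psi$ and on the $\varphi(u)=u^2/2$ sanity check are sensible but not points of divergence.
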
where $\psi$ is a complementary Orlicz function to $\varphi$.
\begin{proof}
Suppose that $A\in \clb(\clh)$. Then for any $x\in \clh$ with $\|x\|=1$, we have
\begin{align*}
w^2(A) & =\displaystyle \sup_{\|x\|=1} |\langle Ax, x \rangle|^2 \\
& \leq \displaystyle \sup_{\|x\|=1} \langle |A|x, x \rangle \langle |A^*|x, x \rangle ~~(\mbox{Lemma}~ \ref{mixedcs})\\
& \leq \displaystyle \sup_{\|x\|=1} \{\varphi(\langle |A|x, x \rangle) + \psi(\langle |A^*|x, x \rangle)\} ~~(\mbox{Lemma}~ \ref{young})\\
& \le \displaystyle \sup_{\|x\|=1}\langle\{\varphi(|A|) + \psi(|A^*|)\}x, x \rangle
~~(\mbox{Lemma}~ \ref{opjensen})\\
& = w(\varphi(|A|) + \psi(|A^*|)) \\
& \leq \|\varphi(|A|)+\psi(|A^*|)\|.
\end{align*}
\end{proof}

Now we have the following inequalities in particular choices of Orlicz
function $\varphi$.
\begin{cor} Let $A\in \clb(\clh)$ and $\varphi(t)=\frac{t^p}{p}$ for $p>1$ and $t\geq 0$. Then the corresponding complementary Orlicz function is $\psi(t)=\frac{t^q}{q}$, where $\frac{1}{p}+\frac{1}{q}=1$, and \\
$(i)$ $w^2(A)\leq w(\frac{|A|^p}{p}+\frac{|A^*|^q}{q})\leq \|\frac{|A|^p}{p}+\frac{|A^*|^q}{q}\|$,\\
$(ii)$ $w^2(A)\leq \|\frac{|A|^2}{2}+\frac{|A^*|^2}{2}\|$, when $p=2$ (Kittaneh \cite{KITTSM05}).
\end{cor}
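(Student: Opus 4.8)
The plan is to obtain the Corollary as a direct specialization of Theorem 3.1, so the only genuine work is to identify the complementary Orlicz function attached to $\varphi(t)=t^p/p$ and then invoke the Theorem. First I would check that $\varphi(t)=t^p/p$ is indeed a non-degenerate Orlicz function for $p>1$: it is continuous, vanishes at $0$, is strictly positive and strictly increasing on $(0,\infty)$, tends to $\infty$, and its second derivative $(p-1)t^{p-2}\ge 0$ gives convexity. In the integral representation $\varphi(u)=\int_0^u p(t)\,dt$ its kernel is $p(t)=t^{p-1}$, which is non-decreasing, vanishes at $0$, is positive for $t>0$, and tends to $\infty$, exactly as required of a kernel.

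Next I would compute the right inverse $q$ of this kernel. Since $t\mapsto t^{p-1}$ is strictly increasing, $q(s)=\sup\{t:t^{p-1}\le s\}=s^{1/(p-1)}$, and hence the complementary Orlicz function is
\[
\psi(v)=\int_0^v q(s)\,ds=\int_0^v s^{1/(p-1)}\,ds=\frac{v^{\,1/(p-1)+1}}{1/(p-1)+1}=\frac{v^{q}}{q},
\]
where $q=\frac{p}{p-1}$, and one checks at once that $\frac1p+\frac1q=1$. This identifies $(\varphi,\psi)$ as the conjugate pair behind the classical Young inequality $uv\le\frac{u^p}{p}+\frac{v^q}{q}$, consistent with Lemma \ref{young}.

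With this in hand, part $(i)$ is immediate: applying Theorem 3.1 to this $\varphi$ gives $w^2(A)\le\|\varphi(|A|)+\psi(|A^*|)\|=\big\|\frac{|A|^p}{p}+\frac{|A^*|^q}{q}\big\|$, while the intermediate bound $w^2(A)\le w\big(\frac{|A|^p}{p}+\frac{|A^*|^q}{q}\big)$ is precisely the penultimate line in the proof of Theorem 3.1 (before passing to the operator norm via $w(T)\le\|T\|$). For part $(ii)$ I would set $p=2$, whence $q=2$, and read off $w^2(A)\le\big\|\frac{|A|^2}{2}+\frac{|A^*|^2}{2}\big\|$, which recovers Kittaneh's upper bound in (\ref{kitt05}); one may additionally remark that $\frac{|A|^2+|A^*|^2}{2}$ is positive, so its numerical radius equals its operator norm.

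I do not anticipate any real obstacle: the statement is a corollary in the strict sense, and the Theorem it rests on has already been proved. The only place demanding a little care is the exponent bookkeeping in computing $q$ and $\psi$ (making sure $1/(p-1)+1=q$ and that the conjugacy relation $1/p+1/q=1$ falls out), together with verifying every clause in the definition of an Orlicz function so that Theorem 3.1 genuinely applies to $\varphi(t)=t^p/p$.
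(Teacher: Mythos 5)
Your proposal is correct and matches the paper's (implicit) argument: the corollary is presented as a direct specialization of Theorem 3.1 to $\varphi(t)=t^p/p$ with complementary function $\psi(t)=t^q/q$, and the intermediate bound $w^2(A)\le w\bigl(\tfrac{|A|^p}{p}+\tfrac{|A^*|^q}{q}\bigr)$ is indeed read off from the penultimate line of that theorem's proof. Your explicit computation of the kernel $t^{p-1}$, its right inverse $s^{1/(p-1)}$, and the resulting $\psi$ just fills in details the paper leaves unstated.
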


\begin{rem}
Suppose that $A$ is an isometry on a Hilbert space $\clh$. Then we have $w(A)=1$ (see \cite{MAJEE}). Also $|A|=\sqrt{A^*A}=I$ and $|A^*|=\sqrt{AA^*}=P$, where $P$ is the orthogonal projection on the range of $A$. Then from the above result with $\varphi(t)=\frac{t^p}{p}$, $\psi(t)=\frac{t^q}{q}$ for $p, q>1$, we get
\begin{center}
$\|\frac{1}{p}I+\frac{1}{q}P\|=1$.
\end{center}
\end{rem}

In our next result, we obtain an upper bound for numerical radius as below.
\begin{thm}{\label{thmastarxbphpsi}}
Let $A, B, X\in \clb(\clh)$, and $\varphi$, $\psi$ be two mutually complementary Orlicz functions. Then\\
(i) $w^r(A^*XB)\leq \|X\|^r w(\varphi(|A|^r)+\psi(|B^r|))$ for $r\geq 2$, and \\
(ii) $w(A^*XB)\leq  \varphi(\sqrt{w(B^*|X|^{2\alpha}B)})+\psi(\sqrt{w(A^*|X|^{2(1-\alpha)}A)})$
for $0\leq \alpha \leq 1$.
\end{thm}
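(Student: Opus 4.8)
The plan is to handle both parts by fixing a unit vector $x \in \clh$, estimating $|\langle A^*XBx, x\rangle| = |\langle XBx, Ax\rangle|$, and bringing in the complementary pair $(\varphi,\psi)$ only at the end through Young's inequality (Lemma~\ref{young}(i)), $uv \le \varphi(u) + \psi(v)$ for $u, v \ge 0$.

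For (i), fix a unit vector $x$ and first apply the Cauchy--Schwarz inequality together with submultiplicativity of the operator norm to get $|\langle XBx, Ax\rangle| \le \|X\|\,\|Bx\|\,\|Ax\|$. Raising this to the power $r$ and writing $\|Ax\|^r = \langle |A|^2 x, x\rangle^{r/2}$, $\|Bx\|^r = \langle |B|^2 x, x\rangle^{r/2}$, the hypothesis $r \ge 2$ makes the exponent $r/2 \ge 1$, so the H\"older--McCarthy inequality (Lemma~\ref{mccarthy}) applied to the positive operators $|A|^2$ and $|B|^2$ gives $\|Ax\|^r \le \langle |A|^r x, x\rangle$ and $\|Bx\|^r \le \langle |B|^r x, x\rangle$. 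Thus $|\langle XBx, Ax\rangle|^r \le \|X\|^r\, \langle |A|^r x, x\rangle\,\langle |B|^r x, x\rangle$. Now apply Young's inequality to these two nonnegative scalars, and then the operator Jensen inequality (Lemma~\ref{opjensen}) to each term --- legitimate since $\varphi, \psi$ are convex and $|A|^r, |B|^r$ are positive, hence have spectrum in $[0,\infty)$ --- to get $\langle |A|^r x, x\rangle\,\langle |B|^r x, x\rangle \le \varphi(\langle |A|^r x, x\rangle) + \psi(\langle |B|^r x, x\rangle) \le \langle \{\varphi(|A|^r) + \psi(|B|^r)\}x, x\rangle$. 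Taking the supremum over unit $x$ yields $w^r(A^*XB) \le \|X\|^r\, w(\varphi(|A|^r) + \psi(|B|^r))$.

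For (ii), again fix a unit vector $x$, and this time apply Lemma~\ref{kitt88} (equivalently Lemma~\ref{mixedcs}) to the operator $X$ and the vectors $Bx, Ax$ with $f(t) = t^\alpha$, $g(t) = t^{1-\alpha}$, which gives $|\langle XBx, Ax\rangle|^2 \le \langle B^*|X|^{2\alpha}B\,x, x\rangle\,\langle A^*|X^*|^{2(1-\alpha)}A\,x, x\rangle$ (so the $A$-side carries $|X^*|$). Since $B^*|X|^{2\alpha}B$ and $A^*|X^*|^{2(1-\alpha)}A$ are positive operators, each inner product $\langle\cdot\,x, x\rangle$ is bounded by the corresponding numerical radius; taking square roots and then applying Young's inequality with $u = \sqrt{\langle B^*|X|^{2\alpha}B\,x,x\rangle}$ and $v = \sqrt{\langle A^*|X^*|^{2(1-\alpha)}A\,x,x\rangle}$, and using that $\varphi$, $\psi$, and the square root are all non-decreasing, gives $|\langle XBx, Ax\rangle| \le \varphi(\sqrt{w(B^*|X|^{2\alpha}B)}) + \psi(\sqrt{w(A^*|X^*|^{2(1-\alpha)}A)})$ for every unit $x$. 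The right-hand side does not depend on $x$, so taking the supremum finishes the proof.

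The computations are short once these ingredients are assembled; the delicate point in both parts is the \emph{order} of operations. Young's inequality --- and, in (i), the operator Jensen step --- must be applied before taking the supremum, because a supremum of a product cannot afterwards be separated into $\varphi + \psi$; similarly, in (ii) replacing $\langle\cdot\,x, x\rangle$ by a numerical radius inside $\varphi$ and $\psi$ is valid only thanks to monotonicity of those maps. The hypothesis $r \ge 2$ is needed precisely so that H\"older--McCarthy applies with exponent $r/2 \ge 1$.
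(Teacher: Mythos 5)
Your proposal is correct and follows essentially the same route as the paper's own proof: Cauchy--Schwarz plus H\"older--McCarthy (Lemma~\ref{mccarthy}) with exponent $r/2\ge 1$, then Young's inequality and operator Jensen (Lemma~\ref{opjensen}) for part (i), and the mixed Schwarz inequality (Lemma~\ref{mixedcs}) followed by Young's inequality and monotonicity for part (ii). The only divergence is that in (ii) you correctly carry $|X^*|$ on the $A$-side as Lemma~\ref{mixedcs} dictates, whereas the paper's statement and proof write $|X|$ there --- a discrepancy in the paper rather than a gap in your argument.
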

\begin{proof}
(i) Suppose that $A, B, X\in \clb(\clh)$. Let $x\in \clh$ with $\|x\|=1$ and $r \geq 2$.
Then we compute the following expression
\begin{align*}
|\langle A^*XB x, x\rangle|^r
&= |\langle XB x, Ax\rangle|^r\\
& \leq \|X\|^r\|Ax\|^r\|Bx\|^r\\
&= \|X\|^r\langle Ax, Ax \rangle^{\frac{r}{2}}\langle Bx, Bx\rangle^{\frac{r}{2}}\\
& \leq \|X\|^r\langle |A|^r x, x\rangle\langle |B|^r x, x\rangle~~(\mbox{Lemma}~ \ref{mccarthy})\\
& \leq \|X\|^r (\varphi(\langle |A|^r x, x\rangle)+ \psi(\langle |B|^r x, x\rangle))\\
\end{align*}
\begin{align*}
& \leq \|X\|^r (\langle \varphi(|A|^r) x, x\rangle + \langle \psi(|B|^r) x, x\rangle)~~(\mbox{Lemma}~ \ref{opjensen})\\
& \leq \|X\|^r w(\varphi(|A|^r)+\psi(|B^r|)).
\end{align*}Hence the result follows.\\
(ii) Further, applying \mbox{Lemma}~ \ref{mixedcs} we have
\begin{align*}
|\langle A^*XB x, x\rangle|
& = |\langle XB x, Ax\rangle|\\
& \leq \langle |X|^{2\alpha}B x, Bx\rangle^{\frac{1}{2}}\langle |X|^{2(1-\alpha)}A x, Ax\rangle^{\frac{1}{2}} \\
& \leq \varphi(\langle B^*|X|^{2\alpha}Bx, x\rangle^{\frac{1}{2}}) + \psi(\langle A^*|X|^{2(1-\alpha)}A x, x\rangle^{\frac{1}{2}}),
\end{align*}
where $0\leq \alpha \leq 1$.
This proves the desired inequality.
\end{proof}

Now we establish upper bounds for $\varphi(w^2(A))$.
\begin{thm}
For any Orlicz function $\varphi$, and $A\in \clb(\clh)$ the following inequalities hold: \\
(i) $\varphi(w^2(A))\leq \|\alpha\varphi(|A|^{\frac{1}{\alpha}})+(1-\alpha)\varphi(|A^*|^{\frac{1}{1-\alpha}})\|$ for all $\alpha\in (0, 1)$, and\\
(ii) $\varphi(w^2(A))\leq \|\alpha\varphi(|A|^2) + (1-\alpha)\varphi(|A^*|^2)\|$ for all $\alpha\in (0, 1)$.
\end{thm}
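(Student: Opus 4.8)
The plan is to handle both parts by a single scheme: for a fixed unit vector $x$, bound $|\langle Ax,x\rangle|^2$ from above by an inner product $\langle Sx,x\rangle$ with $S$ a convex combination of suitable powers of $|A|$ and $|A^*|$, then transport $\varphi$ across this estimate using, in order, the monotonicity of $\varphi$, the \emph{scalar} convexity of $\varphi$, and the operator Jensen inequality (Lemma~\ref{opjensen}), and finally pass to the supremum over unit vectors. The only difference between (i) and (ii) is the parameter chosen in the mixed Cauchy--Schwarz inequality (Lemma~\ref{mixedcs}).

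For (i), fix $x\in\clh$ with $\|x\|=1$. Lemma~\ref{mixedcs} with exponent parameter $\tfrac12$ gives $|\langle Ax,x\rangle|^2\le\langle|A|x,x\rangle\langle|A^*|x,x\rangle$. Since $\tfrac1\alpha$ and $\tfrac1{1-\alpha}$ are conjugate, Young's inequality (the special case of Lemma~\ref{young} with $\varphi(u)=\alpha u^{1/\alpha}$, $\psi(v)=(1-\alpha)v^{1/(1-\alpha)}$) yields $\langle|A|x,x\rangle\langle|A^*|x,x\rangle\le\alpha\langle|A|x,x\rangle^{1/\alpha}+(1-\alpha)\langle|A^*|x,x\rangle^{1/(1-\alpha)}$, and the H\"older--McCarthy inequality (Lemma~\ref{mccarthy}, applicable because $\tfrac1\alpha,\tfrac1{1-\alpha}\ge1$) then gives
\[
|\langle Ax,x\rangle|^2\le\alpha\langle|A|^{1/\alpha}x,x\rangle+(1-\alpha)\langle|A^*|^{1/(1-\alpha)}x,x\rangle=\langle(\alpha|A|^{1/\alpha}+(1-\alpha)|A^*|^{1/(1-\alpha)})x,x\rangle.
\]
Now apply $\varphi$ (non-decreasing), then convexity of $\varphi$ on the two non-negative scalars $\langle|A|^{1/\alpha}x,x\rangle$ and $\langle|A^*|^{1/(1-\alpha)}x,x\rangle$, then Lemma~\ref{opjensen} applied separately to the positive operators $|A|^{1/\alpha}$ and $|A^*|^{1/(1-\alpha)}$:
\[
\varphi(|\langle Ax,x\rangle|^2)\le\alpha\varphi(\langle|A|^{1/\alpha}x,x\rangle)+(1-\alpha)\varphi(\langle|A^*|^{1/(1-\alpha)}x,x\rangle)\le\langle(\alpha\varphi(|A|^{1/\alpha})+(1-\alpha)\varphi(|A^*|^{1/(1-\alpha)}))x,x\rangle.
\]
Writing $T_\alpha=\alpha\varphi(|A|^{1/\alpha})+(1-\alpha)\varphi(|A^*|^{1/(1-\alpha)})\ge0$, the right-hand side is at most $\|T_\alpha\|$, so $\varphi(|\langle Ax,x\rangle|^2)\le\|T_\alpha\|$ for every unit $x$; letting $x$ run through a sequence $(x_n)$ of unit vectors with $|\langle Ax_n,x_n\rangle|\to w(A)$ and using continuity of $\varphi$ gives $\varphi(w^2(A))\le\|T_\alpha\|$, which is (i). For (ii), one repeats the argument verbatim but starts from Lemma~\ref{mixedcs} with exponent parameter $\alpha$ itself, i.e. $|\langle Ax,x\rangle|^2\le\langle|A|^{2\alpha}x,x\rangle\langle|A^*|^{2(1-\alpha)}x,x\rangle$; Young's inequality with the same conjugate pair followed by H\"older--McCarthy now produces $(|A|^{2\alpha})^{1/\alpha}=|A|^2$ and $(|A^*|^{2(1-\alpha)})^{1/(1-\alpha)}=|A^*|^2$, giving $|\langle Ax,x\rangle|^2\le\langle(\alpha|A|^2+(1-\alpha)|A^*|^2)x,x\rangle$, after which the remaining steps are identical.

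I expect the one subtle point — and the place where one is most likely to slip — to be the order in which $\varphi$ is moved across the estimate: its convexity must be used at the level of the \emph{scalars} $\langle\,\cdot\, x,x\rangle$ \emph{before} invoking Lemma~\ref{opjensen}, since a generic convex Orlicz function need not be operator convex, so one may not write $\varphi(\alpha S+(1-\alpha)T)\le\alpha\varphi(S)+(1-\alpha)\varphi(T)$ at the operator level. A secondary (routine) point is the interchange $\sup_{\|x\|=1}\varphi(|\langle Ax,x\rangle|^2)=\varphi(w^2(A))$, which uses only continuity and monotonicity of $\varphi$. Everything else is bookkeeping; the only genuine choice is selecting the parameter $\tfrac12$ in Lemma~\ref{mixedcs} for (i) and $\alpha$ for (ii), which is exactly what makes the resulting exponents match the statement.
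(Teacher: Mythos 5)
Your proposal is correct and follows essentially the same route as the paper: mixed Cauchy--Schwarz (Lemma~\ref{mixedcs}), Young's inequality with the conjugate pair $\tfrac1\alpha,\tfrac1{1-\alpha}$, scalar convexity of $\varphi$, and the operator Jensen inequality, with the parameter in Lemma~\ref{mixedcs} switched between $\tfrac12$ and $\alpha$ for the two parts. The only (harmless) difference is bookkeeping: you make the H\"older--McCarthy step and the passage to the supremum explicit, whereas the paper folds them into ``convexity of $\varphi$ and Lemma~\ref{opjensen}.''
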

\begin{proof}
(i) Let $A\in \clb(\clh)$, $\alpha\in(0, 1)$ and choose $x\in \clh$ such that $\|x\|=1$. Then by \mbox{Lemma}~ \ref{mixedcs} and Young's inequality, we get
\begin{align*}
\varphi(|\langle Ax, x \rangle|^2)
&\leq \varphi(\langle |A|x, x \rangle \langle |A^*|x, x \rangle)\\
& \leq \varphi(\alpha \langle |A|x, x \rangle^{\frac{1}{\alpha}}+ (1-\alpha)\langle |A^*|x, x \rangle^{\frac{1}{1-\alpha}})\\
& \leq \alpha \langle \varphi(|A|^{\frac{1}{\alpha}})x, x \rangle + (1-\alpha)\langle \varphi(|A^*|^{\frac{1}{1-\alpha}})x, x \rangle~~(\mbox{convexity~ of~}\varphi~~\mbox{and}~\mbox{Lemma}~ \ref{opjensen})\\
& = \langle \{\alpha\varphi(|A|^{\frac{1}{\alpha}})+ (1-\alpha)\varphi(|A^*|^{\frac{1}{1-\alpha}})\}x, x \rangle,
\end{align*}which proves the desired inequality.\\
(ii) Using \mbox{convexity~ of~}$\varphi$, ~~\mbox{Lemma}~ \ref{opjensen}, ~~\mbox{Lemma}~ \ref{mccarthy}, and Young's inequality it follows that:
\begin{align*}
&\varphi(|\langle Ax, x \rangle|^2)\\
&\leq \varphi(\langle |A|^{2\alpha}x, x \rangle \langle |A^*|^{2(1-\alpha)}x, x \rangle)~~(\mbox{Lemma}~ \ref{mixedcs})\\
& \leq \varphi(\alpha \langle |A|^{2\alpha}x, x \rangle^{\frac{1}{\alpha}}+ (1-\alpha)\langle |A^*|^{2(1-\alpha)}x, x \rangle^{\frac{1}{1-\alpha}})~~\\
& \leq \alpha \langle \varphi(|A|^{2})x, x \rangle + (1-\alpha)\langle \varphi(|A^*|^{2})x, x \rangle\\
& = \langle \{\alpha\varphi(|A|^{2})+ (1-\alpha)\varphi(|A^*|^{2})\}x, x \rangle,
\end{align*} which gives the desired inequality.
\end{proof}
Now we obtain a general form of inequality (\ref{omkitt}) established by Abu-Omar and Kittaneh (see \cite{OMARKITT}). In fact we have the following theorem.
\begin{thm}{\label{improvmkitt}}
Let $\varphi$ be an Orlicz function and $A\in \clb(\clh)$. Then
$$\varphi(w^2(A))\leq \Big\|\displaystyle\int_{0}^{1}\varphi(t|A|^{2}+(1-t)|A^*|^{2})dt \Big\|+ \frac{1}{2}\varphi(w(A^2))\leq \frac{1}{4}\|\varphi(|A|^{2})+\varphi(|A^*|^{2})\|+ \frac{1}{2}\varphi(w(A^2)).$$
\end{thm}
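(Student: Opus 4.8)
The plan is to follow the scheme that yields the Abu--Omar--Kittaneh inequality (\ref{omkitt}), but to keep at the scalar level an estimate of the shape $|\langle Ax,x\rangle|^2\le \tfrac14 c_1+\tfrac14 c_2+\tfrac12 c_3$ whose three weights sum to one, so that afterwards the convexity of $\varphi$ can be invoked. First I would fix a unit vector $x\in\clh$ and apply Buzano's inequality (Lemma~\ref{buzano}) with the vectors $Ax$, $A^*x$ in place of $x$, $y$ and with $e=x$. Since $\langle x,A^*x\rangle=\langle Ax,x\rangle$ and $\langle Ax,A^*x\rangle=\langle A^2x,x\rangle$, this gives
\[
|\langle Ax,x\rangle|^2\le \tfrac12\big(\|Ax\|\,\|A^*x\|+|\langle A^2x,x\rangle|\big).
\]
Bounding $\|Ax\|\,\|A^*x\|=\langle|A|^2x,x\rangle^{1/2}\langle|A^*|^2x,x\rangle^{1/2}$ by the AM--GM inequality and using $|\langle A^2x,x\rangle|\le w(A^2)$, I obtain
\[
|\langle Ax,x\rangle|^2\le \tfrac14\langle|A|^2x,x\rangle+\tfrac14\langle|A^*|^2x,x\rangle+\tfrac12\,w(A^2).
\]

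Next, applying the non-decreasing function $\varphi$ to both sides and using its convexity together with $\tfrac14+\tfrac14+\tfrac12=1$, the right-hand side splits:
\[
\varphi(|\langle Ax,x\rangle|^2)\le \tfrac14\varphi(\langle|A|^2x,x\rangle)+\tfrac14\varphi(\langle|A^*|^2x,x\rangle)+\tfrac12\varphi(w(A^2)).
\]
Since $|A|^2$ and $|A^*|^2$ are positive operators (their spectra lie in $[0,\infty)$), Lemma~\ref{opjensen} gives $\varphi(\langle|A|^2x,x\rangle)\le\langle\varphi(|A|^2)x,x\rangle$ and likewise for $|A^*|^2$, so that
\[
\varphi(|\langle Ax,x\rangle|^2)\le \tfrac14\big\langle\big(\varphi(|A|^2)+\varphi(|A^*|^2)\big)x,x\big\rangle+\tfrac12\varphi(w(A^2)).
\]

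Finally I would take the supremum over all unit vectors $x$. On the right the first term is at most $\tfrac14\,w(\varphi(|A|^2)+\varphi(|A^*|^2))\le \tfrac14\|\varphi(|A|^2)+\varphi(|A^*|^2)\|$, while on the left, because $\varphi$ is continuous and non-decreasing, $\sup_{\|x\|=1}\varphi(|\langle Ax,x\rangle|^2)=\varphi\big(\sup_{\|x\|=1}|\langle Ax,x\rangle|^2\big)=\varphi(w^2(A))$ (monotonicity gives ``$\le$'' and continuity evaluated along a maximizing sequence gives ``$\ge$''). This delivers the stated inequality.

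The point that needs the most attention is this last interchange of the supremum with $\varphi$, which genuinely uses the continuity of the Orlicz function and not merely its monotonicity; apart from that, the only thing to watch is that the three coefficients produced by Buzano's inequality together with AM--GM are exactly $\tfrac14,\tfrac14,\tfrac12$, so that the convex-combination form of Jensen's inequality for $\varphi$ applies cleanly. Everything else is a direct concatenation of Lemma~\ref{opjensen}, Lemma~\ref{buzano} and the elementary AM--GM inequality, so that one also recovers (\ref{omkitt}) by specializing to $\varphi(t)=t$.
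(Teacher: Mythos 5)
Your proof is correct and follows essentially the same route as the paper's: Buzano's inequality applied to $Ax$, $A^*x$ with $e=x$, then AM--GM, the convexity of $\varphi$ with weights $\tfrac14,\tfrac14,\tfrac12$, and the operator Jensen inequality (Lemma~\ref{opjensen}). The only difference is cosmetic --- you perform the AM--GM step and the bound $|\langle A^2x,x\rangle|\le w(A^2)$ at the scalar level before applying $\varphi$, whereas the paper applies $\varphi$ first and then uses convexity twice; your explicit justification of interchanging $\sup$ with $\varphi$ via continuity is a welcome detail the paper leaves implicit.
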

\begin{proof}
Suppose that $A\in \clb(\clh)$ and $x\in \clh$ with $\|x\|=1$. Then \mbox{Lemma}~ \ref{mixedcs}, \mbox{Lemma}~ \ref{buzano}, \emph{AM-GM}~ inequality, and convexity of $\varphi$ implies that
\begin{align}{\label{improvmkittinq1}}
&\varphi(|\langle Ax, x \rangle|^2)\nonumber\\
& = \varphi(|\langle Ax, x \rangle \langle x, A^*x \rangle|)\nonumber\\
&\leq \varphi(\frac{1}{2}(\|Ax\|\|A^*x\| + |\langle Ax, A^*x \rangle|))\nonumber\\
& \leq \frac{1}{2}\varphi(\langle Ax, Ax \rangle^{\frac{1}{2}}\langle A^*x, A^*x \rangle^{\frac{1}{2}})+\frac{1}{2}\varphi(|\langle A^2x, x \rangle|)\nonumber\\
& \leq \frac{1}{2}\varphi(\frac{\langle A^*Ax, x \rangle + \langle AA^*x, x \rangle}{2})+ \frac{1}{2}\varphi(|\langle A^2x, x \rangle|).
\end{align}
Using the Hermite-Hadamard inequality (\ref{HERMITEHAD}) in (\ref{improvmkittinq1}), we get
\begin{align}
&\varphi(|\langle Ax, x \rangle|^2)\nonumber\\
& \leq \frac{1}{2}\Big\langle\displaystyle\int_{0}^{1} \{\varphi(t|A|^{2}+(1-t)|A^*|^{2})dt\}x, x\Big\rangle+ \frac{1}{2}\varphi(|\langle A^2x, x \rangle|) \nonumber\\
& \leq  \frac{1}{4}(\langle \varphi(|A|^{2})x, x \rangle + \langle \varphi(|A^*|^{2})x, x \rangle)+ \frac{1}{2}\varphi(|\langle A^2x, x \rangle|)~~(\mbox{Lemma}~ \ref{opjensen}).\nonumber
\end{align}
This proves the desired inequality.
\end{proof}

\begin{rem}
Let $T_z$ be the shift operator on the $E$-valued Hardy space $H_E^2(\mathbb{D})$ over the unit disc $\mathbb{D}$. Then $w^2(T_z)=1$ (see \cite{MAJEE}). Therefore by choosing $\varphi(t)=t^r$, $r\geq 1$ in the above result, we have $1\leq \frac{1}{4}\|I+P\|+\frac{1}{2}$, that is $\|I+P\|=2$, where $P$ is the orthogonal projection onto $zH_E^2(\mathbb{D})$.
\end{rem}

Sometimes it is very difficult to obtain the complementary Orlicz function $\psi$ to $\varphi$ explicitly. So here we obtain an upper bound of numerical radius, which involves only $\varphi$.
\begin{thm}
Let $A, B, X\in \clb(\clh)$ and $\varphi$ be any Orlicz function. Then
$$\varphi(w(A^*XB))\leq \Big\|\displaystyle\int_{0}^{1}\varphi(t\|X\||A|^{2}+(1-t)\|X\||B|^{2})dt \Big\| \leq \frac{1}{2}w(\varphi(\|X\||A|^2)+\varphi(\|X\||B|^2)).$$
\end{thm}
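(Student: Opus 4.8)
The plan is to mimic the argument of Theorem~\ref{thmastarxbphpsi}(ii) and the Abu-Omar--Kittaneh type estimate, but now using only $\varphi$ together with Young's inequality in the special case $\alpha=\tfrac12$, where the mixed Cauchy--Schwarz inequality (Lemma~\ref{mixedcs}) reduces to a symmetric product. Fix a unit vector $x\in\clh$. First I would write $\langle A^*XBx,x\rangle=\langle XBx,Ax\rangle$ and apply Lemma~\ref{mixedcs} with $\alpha=\tfrac12$ to get
\[
|\langle A^*XBx,x\rangle|\le \langle |X|Bx,Bx\rangle^{1/2}\langle |X|Ax,Ax\rangle^{1/2}
=\langle B^*|X|Bx,x\rangle^{1/2}\langle A^*|X|Ax,x\rangle^{1/2}.
\]
Then, using the operator inequality $B^*|X|B\le \|X\|\,B^*B=\|X\||B|^2$ (and similarly for $A$), monotonicity of the functional calculus gives $\langle B^*|X|Bx,x\rangle\le \|X\|\langle |B|^2x,x\rangle$, and likewise for $A$.

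Next I would apply $\varphi$ to both sides; by the non-decreasing property of $\varphi$ and then the AM--GM inequality inside the argument,
\[
\varphi(|\langle A^*XBx,x\rangle|)\le
\varphi\!\left(\big(\|X\|\langle|A|^2x,x\rangle\big)^{1/2}\big(\|X\|\langle|B|^2x,x\rangle\big)^{1/2}\right)
\le \varphi\!\left(\frac{\|X\|\langle|A|^2x,x\rangle+\|X\|\langle|B|^2x,x\rangle}{2}\right).
\]
Now convexity of $\varphi$ splits the right side as $\tfrac12\varphi(\|X\|\langle|A|^2x,x\rangle)+\tfrac12\varphi(\|X\|\langle|B|^2x,x\rangle)$, and Lemma~\ref{opjensen} (the operator Jensen inequality, applied to the self-adjoint operators $\|X\||A|^2$ and $\|X\||B|^2$, whose spectra lie in $[0,\infty)$) upgrades each term to $\tfrac12\langle\varphi(\|X\||A|^2)x,x\rangle$ and $\tfrac12\langle\varphi(\|X\||B|^2)x,x\rangle$ respectively. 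Combining, $\varphi(|\langle A^*XBx,x\rangle|)\le \tfrac12\langle(\varphi(\|X\||A|^2)+\varphi(\|X\||B|^2))x,x\rangle$.

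Finally I would take the supremum over unit vectors $x$. The only subtlety is that $\varphi(w(A^*XB))=\varphi(\sup_x|\langle A^*XBx,x\rangle|)=\sup_x\varphi(|\langle A^*XBx,x\rangle|)$, which is legitimate because $\varphi$ is continuous and non-decreasing on $[0,\infty]$; the right-hand supremum is exactly $w\big(\tfrac12(\varphi(\|X\||A|^2)+\varphi(\|X\||B|^2))\big)$ since that operator is positive (hence self-adjoint), giving the claimed bound. The main obstacle, as in the other results, is bookkeeping: being careful that Lemma~\ref{mixedcs} is invoked in the symmetric $\alpha=\tfrac12$ form so that Young's inequality is not needed (only AM--GM and convexity), and verifying that the operator-monotone step $B^*|X|B\le\|X\||B|^2$ survives the application of $\varphi$ — which it does, since composing the monotone functional calculus of the single operator $\|X\||B|^2$ with the non-decreasing $\varphi$ preserves the inequality at the level of quadratic forms. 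No step requires knowing $\psi$ explicitly, which is the point of the theorem.
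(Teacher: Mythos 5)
Your argument is correct and follows essentially the same route as the paper's proof: after your detour through Lemma~\ref{mixedcs} with $\alpha=\tfrac{1}{2}$ and the bound $|X|\le\|X\|I$, you arrive at exactly the paper's intermediate estimate $\varphi\big(\|X\|\langle|A|^2x,x\rangle^{1/2}\langle|B|^2x,x\rangle^{1/2}\big)$ (the paper gets there in one step via the plain Cauchy--Schwarz bound $|\langle XBx,Ax\rangle|\le\|X\|\,\|Ax\|\,\|Bx\|$), and the subsequent AM--GM, convexity, Jensen, and supremum steps coincide. The only slip is that the second factor produced by Lemma~\ref{mixedcs} should involve $|X^*|$ rather than $|X|$, which is harmless here since $\||X^*|\|=\|X\|$.
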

\begin{proof}
Let $x\in \clh$ with $\|x \|=1$. Since $\varphi$ is convex so by applying \mbox{Lemma}~ \ref{opjensen}, one gets
\begin{align*}
\varphi(|\langle A^*XB x, x\rangle|)
&= \varphi(|\langle XBx, Ax\rangle|)\\
& \leq \varphi(\|X\|\|Ax\|\|Bx\|)\\
&= \varphi(\|X\|\langle |A|^2x, x\rangle^{\frac{1}{2}}\langle |B|^2 x, x\rangle^{\frac{1}{2}})\\
& \leq \varphi\Big(\frac{\|X\|\langle |A|^2x, x\rangle+\langle |B|^2 x, x\rangle}{2}\Big).
\end{align*}
Using the convexity of $\varphi$, and \mbox{Lemma}~ \ref{opjensen} we get
\begin{align*}
&\varphi(|\langle A^*XB x, x\rangle|)\\
& \leq \Big\langle\displaystyle\int_{0}^{1}\{\varphi(t\|X\||A|^{2}+(1-t)\|X\||B|^{2})dt\}x, x \Big\rangle\\
& \leq \frac{1}{2}\langle(\varphi(\|X\||A|^2)+\varphi(\|X\||B|^2)) x, x\rangle.
\end{align*}
The result is immediately follows by taking supremum over $\|x \|=1$.
\end{proof}
\begin{rem}
Suppose $X \in \clb(\clh)$ is a contraction, that is, $\|X\|\leq 1$.
Then using the property $\varphi(\alpha t)\leq \alpha \varphi(t)$ for $0<\alpha \leq 1$, $t\geq 0$, we get
$\varphi(\|X\||A|^2)\leq \|X\|\varphi(|A|^2)$, and similarly $\varphi(\|X\||B|^2)\leq \|X\|\varphi(|B|^2)$. Therefore, the above result yields the following inequality for a contraction $X$:
$$\varphi(w(A^*XB))\leq \|X\|\Big\|\displaystyle\int_{0}^{1}\varphi(t|A|^{2}+(1-t)|B|^{2})dt \Big\|\leq \frac{\|X\|}{2}w(\varphi(|A|^2)+\varphi(|B|^2)).$$
\end{rem}
For $X\in \clb(\clh)$, and two positive operators $A, B \in \clb(\clh)$ the operator $A^{\alpha}XB^{1-\alpha}$ for $0 \leq \alpha \leq 1$ has its special importance in operator theory. Here we establish an upper bound of $\varphi(w(A^{\alpha}XB^{1-\alpha}))$ for Orlicz function $\varphi$. Subsequently, we compute $w^r(A^{\alpha}XB^{1-\alpha})$ for $r\geq 1$ and many others by choosing $\varphi$ suitably.
\begin{thm}{\label{thmaalphaxb}}
Let $A, B, X\in \clb(\clh)$ such that $A, B \geq 0$. \\
(i) For any Orlicz function $\varphi$ and $\alpha\in (0, \frac{1}{2}]$
$$\varphi(w(A^{\alpha}XB^{1-\alpha}))\leq w(\alpha\varphi(\|X\|A)+(1-\alpha)\varphi(\|X\|B)).$$
(ii) Further, if $\alpha\in [0, 1]$ then
$$\varphi(w(A^{\alpha}XB^{1-\alpha}))\leq \frac{1}{2}w(\varphi(\|X\|A^{2\alpha})+\varphi(\|X\|B^{2(1-\alpha)})).$$
\end{thm}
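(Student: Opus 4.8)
My plan is to reduce both parts to pointwise estimates on $|\langle A^{\alpha}XB^{1-\alpha}x,x\rangle|$ over unit vectors $x$ and then take a supremum, exactly in the spirit of the earlier proofs in this section. The common opening move: since $A,B\geq 0$, the operators $A^{\alpha}$ and $B^{1-\alpha}$ are positive, so
\[
|\langle A^{\alpha}XB^{1-\alpha}x,x\rangle|
=|\langle XB^{1-\alpha}x,A^{\alpha}x\rangle|
\leq \|X\|\,\|B^{1-\alpha}x\|\,\|A^{\alpha}x\|
=\|X\|\,\langle A^{2\alpha}x,x\rangle^{\frac12}\langle B^{2(1-\alpha)}x,x\rangle^{\frac12},
\]
using Cauchy--Schwarz together with $\|XB^{1-\alpha}x\|\leq\|X\|\|B^{1-\alpha}x\|$ and $\|A^{\alpha}x\|^{2}=\langle A^{2\alpha}x,x\rangle$ (and likewise for $B$); Lemma~\ref{kitt88} applied to $X$ with $f(t)=t^{1-\alpha}$, $g(t)=t^{\alpha}$ gives the same bound.

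For part (ii) I would then apply the continuous, non-decreasing, convex function $\varphi$, split the geometric mean symmetrically via the arithmetic--geometric mean inequality, and use convexity of $\varphi$, to obtain
\[
\varphi\!\left(\|X\|\,\langle A^{2\alpha}x,x\rangle^{\frac12}\langle B^{2(1-\alpha)}x,x\rangle^{\frac12}\right)
\leq \tfrac12\,\varphi\big(\|X\|\langle A^{2\alpha}x,x\rangle\big)+\tfrac12\,\varphi\big(\|X\|\langle B^{2(1-\alpha)}x,x\rangle\big).
\]
Then the operator Jensen inequality (Lemma~\ref{opjensen}), applied to the positive operators $\|X\|A^{2\alpha}$ and $\|X\|B^{2(1-\alpha)}$, pulls $\varphi$ inside the inner products, giving $\varphi(|\langle A^{\alpha}XB^{1-\alpha}x,x\rangle|)\leq\tfrac12\langle\{\varphi(\|X\|A^{2\alpha})+\varphi(\|X\|B^{2(1-\alpha)})\}x,x\rangle$. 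Taking the supremum over unit $x$ turns the left side into $\varphi(w(A^{\alpha}XB^{1-\alpha}))$, by continuity and monotonicity of $\varphi$, and the right side into $\tfrac12\,w(\varphi(\|X\|A^{2\alpha})+\varphi(\|X\|B^{2(1-\alpha)}))$, the operator in question being positive. This settles (ii).

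For part (i) the outline is the same, except the geometric mean is split with weights $\alpha$ and $1-\alpha$: write $\langle A^{2\alpha}x,x\rangle^{\frac12}\langle B^{2(1-\alpha)}x,x\rangle^{\frac12}=\big(\langle A^{2\alpha}x,x\rangle^{\frac1{2\alpha}}\big)^{\alpha}\big(\langle B^{2(1-\alpha)}x,x\rangle^{\frac1{2(1-\alpha)}}\big)^{1-\alpha}$, apply $\varphi$ to $\|X\|$ times this quantity, and use Young's inequality (Lemma~\ref{young}) for the complementary pair $\varphi_{0}(u)=\alpha u^{1/\alpha}$, $\psi_{0}(v)=(1-\alpha)v^{1/(1-\alpha)}$ together with convexity of $\varphi$ to bound it pointwise by $\alpha\,\varphi\big(\|X\|\langle A^{2\alpha}x,x\rangle^{\frac1{2\alpha}}\big)+(1-\alpha)\,\varphi\big(\|X\|\langle B^{2(1-\alpha)}x,x\rangle^{\frac1{2(1-\alpha)}}\big)$. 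Since $\alpha\leq\tfrac12$ we have $\tfrac1{2\alpha}\geq 1$, so the H\"{o}lder--McCarthy inequality (Lemma~\ref{mccarthy}) applied to the positive operator $A^{2\alpha}$ yields $\langle A^{2\alpha}x,x\rangle^{\frac1{2\alpha}}\leq\langle Ax,x\rangle$, and Lemma~\ref{opjensen} for $\|X\|A$ then pulls $\varphi$ inside. Treating the $B$-factor in the same fashion and passing to the supremum gives the claimed inequality.

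The step I expect to be the crux is precisely that last handling of the $B$-factor in (i): one wants $\langle B^{2(1-\alpha)}x,x\rangle^{\frac1{2(1-\alpha)}}\leq\langle Bx,x\rangle$, but the exponent $\tfrac1{2(1-\alpha)}$ lies in $(0,1]$ exactly in the regime $\alpha\leq\tfrac12$ under consideration, which is the wrong side for a direct appeal to Lemma~\ref{mccarthy} (that lemma supplies the reverse inequality there). Making this estimate go through --- or, if a strictly pointwise bound is not available, comparing the two numerical radii directly rather than term by term --- is where the genuine content of part (i) resides; the remaining bookkeeping, and all of part (ii), is routine given the lemmas of Section~2.
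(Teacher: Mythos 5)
Your treatment of part (ii) is correct and is exactly the paper's argument: Cauchy--Schwarz, the symmetric AM--GM split, convexity of $\varphi$, operator Jensen, and a supremum. Nothing to add there.

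For part (i) you have followed the same route the paper takes --- weighted AM--GM with weights $\alpha$ and $1-\alpha$ applied to $\langle A^{2\alpha}x,x\rangle^{1/2}\langle B^{2(1-\alpha)}x,x\rangle^{1/2}$, followed by H\"older--McCarthy to reduce the exponents --- and you have put your finger on precisely the step that does not go through. The paper's proof passes from $\alpha\langle A^{2\alpha}x,x\rangle^{\frac{1}{2\alpha}}+(1-\alpha)\langle B^{2(1-\alpha)}x,x\rangle^{\frac{1}{2(1-\alpha)}}$ to $\alpha\langle Ax,x\rangle+(1-\alpha)\langle Bx,x\rangle$ citing Lemma \ref{mccarthy}, but that lemma only handles the $A$-term (exponent $\frac{1}{2\alpha}\geq 1$ when $\alpha\leq\frac12$). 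For the $B$-term the required inequality is $\langle B^{s}x,x\rangle\leq\langle Bx,x\rangle^{s}$ with $s=2(1-\alpha)\geq 1$, which is the \emph{reverse} of H\"older--McCarthy and is false in general: take $B$ a projection of rank one in a two-dimensional space and $x$ the normalized sum of an eigenvector in the range and one in the kernel, so that $\langle B^{s}x,x\rangle^{1/s}=2^{-1/s}>\tfrac12=\langle Bx,x\rangle$ for every $s>1$. Both exponents are simultaneously tamed only at $\alpha=\tfrac12$, where part (i) collapses into part (ii). So the gap you flag is genuine; it cannot be closed by a more careful reading of the lemmas, and it is present (silently) in the paper's own proof of part (i). Your suggestion of comparing the two numerical radii globally rather than pointwise is the right instinct for a repair, but as written neither your proposal nor the paper establishes part (i) for $\alpha<\tfrac12$; only part (ii) is actually proved.
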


\begin{proof}
(i) Let $x\in \clh$ with $\|x \|=1$, and $A, B, X\in \clb(\clh)$
such that $A, B \geq 0$. Using \mbox{Lemma}~ \ref{mixedcs},
Young's inequality and \mbox{Lemma}~ \ref{mccarthy}, we get
\begin{align*}
\varphi(|\langle A^{\alpha}XB^{1-\alpha} x, x\rangle|)
&= \varphi(|\langle XB^{1-\alpha}x, A^{\alpha}x\rangle|)\\
& \leq \varphi(\|X\|\|A^{\alpha}x\|\|B^{1-\alpha}x\|)\\
&= \varphi(\|X\|\langle A^{2\alpha}x, x\rangle^{\frac{1}{2}}\langle B^{2(1-\alpha)} x, x\rangle^{\frac{1}{2}})\\
& \leq \varphi(\|X\|(\alpha\langle A^{2\alpha}x, x\rangle^{\frac{1}{2\alpha}}+ (1-\alpha)\langle B^{2(1-\alpha)} x, x\rangle^{\frac{1}{2(1-\alpha)}}))\\
& \leq \varphi\Big(\|X\|(\alpha\langle Ax, x\rangle + (1-\alpha)\langle B x, x\rangle)\Big)\\
& \leq \langle(\alpha\varphi(\|X\|A)+ (1-\alpha)\varphi(\|X\|B)) x, x\rangle~~(\mbox{since~}\varphi ~\mbox{is~convex}).
\end{align*}This proves the desired inequality.\\
(ii) Proceeding as in the case (i), we get the following inequality
\begin{align*}
&\varphi(|\langle A^{\alpha}XB^{1-\alpha} x, x\rangle|)\\
&= \varphi(\|X\|\langle A^{2\alpha}x, x\rangle^{\frac{1}{2}}\langle B^{2(1-\alpha)} x, x\rangle^{\frac{1}{2}})~~(\mbox{Lemma}~ \ref{mixedcs})\\
& \leq \varphi(\|X\|(\frac{1}{2}\langle A^{2\alpha}x, x\rangle+ \frac{1}{2}\langle B^{2(1-\alpha)} x, x\rangle))\\
& \leq \frac{1}{2}\langle(\varphi(\|X\|A^{2\alpha})+ \varphi(\|X\|B^{2(1-\alpha)})) x, x\rangle. (~\mbox{Lemma}~ \ref{opjensen})
\end{align*}
This proves the desired result.
\end{proof}
\begin{rem}
In addition, if $X\in \clb(\clh)$ as a contraction, then\\
(i) For $\alpha\in(0, \frac{1}{2}]$:
$$\varphi(w(A^{\alpha}XB^{1-\alpha}))\leq \|X\|w(\alpha\varphi(A)+(1-\alpha)\varphi(B)).$$ \\
(ii) For $\alpha\in [0, 1]$:
$$\varphi(w(A^{\alpha}XB^{1-\alpha}))\leq \frac{\|X\|}{2}w(\varphi(A^{2\alpha})+\varphi(B^{2(1-\alpha)})).$$
\end{rem}
In the following we establish an upper bound of the numerical radius of finite sum of operators of the form $\displaystyle\sum_{i=1}^{n}A_i^*X_iB_i$.
\begin{thm}{\label{thmsumaxb}}
Let $A_k, B_k, X_k\in \clb(\clh)$ for $k=1, 2,\ldots, n$ and $\varphi$ be an Orlicz function. Suppose that $f, g: [0, \infty)\rightarrow [0, \infty)$ be two continuous functions such that $f(t)g(t)=t$ for all $t\in [0, \infty)$. Then the following inequality holds true:
\begin{align*}
\varphi(w(\displaystyle\sum_{k=1}^{n}A_k^*X_kB_k))& \leq \frac{1}{2n}\displaystyle\sum_{k=1}^{n}\|\varphi(nB_k^*f^2(|X_k|)B_k)+\varphi(nA_k^*g^2(|X_k^*|)A_k)\|.
\end{align*}
\end{thm}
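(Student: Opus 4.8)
The plan is to bound $\big|\big\langle \sum_{k=1}^{n} A_k^*X_kB_k\,x,x\big\rangle\big|$ for an arbitrary unit vector $x\in\clh$, then apply $\varphi$ and finally take the supremum over $x$. First I would use the triangle inequality to split the sum and then invoke Lemma~\ref{kitt88} for each term, applied to the operator $X_k$ with vectors $B_kx$ and $A_kx$, to get
\[
\big|\big\langle \textstyle\sum_{k=1}^{n} A_k^*X_kB_k\,x,x\big\rangle\big|
\leq \sum_{k=1}^{n}\big|\langle X_kB_kx, A_kx\rangle\big|
\leq \sum_{k=1}^{n}\big\|f(|X_k|)B_kx\big\|\,\big\|g(|X_k^*|)A_kx\big\|.
\]
Since $f$ and $g$ are real-valued, $f(|X_k|)$ and $g(|X_k^*|)$ are self-adjoint, so $\|f(|X_k|)B_kx\|^2=\langle B_k^*f^2(|X_k|)B_kx,x\rangle$ and $\|g(|X_k^*|)A_kx\|^2=\langle A_k^*g^2(|X_k^*|)A_kx,x\rangle$. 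Writing $a_k:=\langle B_k^*f^2(|X_k|)B_kx,x\rangle\ge 0$ and $b_k:=\langle A_k^*g^2(|X_k^*|)A_kx,x\rangle\ge 0$ and applying the \emph{AM--GM} inequality to each factorized term yields $\big|\big\langle \sum_k A_k^*X_kB_k\,x,x\big\rangle\big|\leq \tfrac12\sum_{k=1}^{n}(a_k+b_k)$.

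Next, since $\varphi$ is non-decreasing I would apply it to both sides and then use the convexity estimate of Lemma~\ref{lembohr}, with $2n$ terms, after the key rescaling: view $\tfrac12\sum_{k=1}^{n}(a_k+b_k)$ as $\tfrac{1}{2n}$ times the sum of the $2n$ nonnegative numbers $na_1,nb_1,\dots,na_n,nb_n$. This gives
\[
\varphi\Big(\tfrac12\textstyle\sum_{k=1}^{n}(a_k+b_k)\Big)
= \varphi\Big(\tfrac{1}{2n}\textstyle\sum_{k=1}^{n}(na_k+nb_k)\Big)
\leq \tfrac{1}{2n}\sum_{k=1}^{n}\big(\varphi(na_k)+\varphi(nb_k)\big).
\]
This rescaling is the one genuinely delicate bookkeeping point of the argument: it is precisely what produces the factor $n$ \emph{inside} $\varphi$ in the final bound, and one must make sure it is $na_k,nb_k$ (not $a_k,b_k$) that are fed into $\varphi$.

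Then I would push $\varphi$ into the inner products. Since $nB_k^*f^2(|X_k|)B_k$ and $nA_k^*g^2(|X_k^*|)A_k$ are positive operators, Lemma~\ref{opjensen} gives $\varphi(na_k)=\varphi(\langle nB_k^*f^2(|X_k|)B_kx,x\rangle)\leq \langle\varphi(nB_k^*f^2(|X_k|)B_k)x,x\rangle$ and likewise for $b_k$. Setting $C_k:=\varphi(nB_k^*f^2(|X_k|)B_k)+\varphi(nA_k^*g^2(|X_k^*|)A_k)$, which is positive because $\varphi\ge 0$, we arrive at $\varphi\big(\big|\big\langle\sum_k A_k^*X_kB_k\,x,x\big\rangle\big|\big)\leq \tfrac{1}{2n}\sum_{k=1}^{n}\langle C_kx,x\rangle$ for every unit vector $x$. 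Finally, taking the supremum over $\|x\|=1$: the left-hand side becomes $\varphi\big(w(\sum_k A_k^*X_kB_k)\big)$ since $\varphi$ is continuous and non-decreasing (hence the supremum passes through $\varphi$ over the bounded set of values $|\langle\cdot\, x,x\rangle|$), while the right-hand side is at most $\tfrac{1}{2n}\sum_k w(C_k)=\tfrac{1}{2n}\sum_k\|C_k\|$ because each $C_k$ is positive, so $w(C_k)=\|C_k\|$. This is exactly the claimed inequality. Apart from the scaling step invoking Lemma~\ref{lembohr}, every step is a direct application of a lemma from Section~2, so I do not expect any serious obstacle.
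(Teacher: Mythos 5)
Your proposal is correct and follows essentially the same route as the paper's proof: triangle inequality, Lemma~\ref{kitt88}, AM--GM, the rescaling $\tfrac12\sum_k(a_k+b_k)=\tfrac{1}{2n}\sum_k(na_k+nb_k)$ combined with Lemma~\ref{lembohr} over $2n$ terms, then Lemma~\ref{opjensen} and the identification $w(C_k)=\|C_k\|$ for positive $C_k$. In fact you make explicit the final supremum step and the positivity argument that the paper leaves implicit.
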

\begin{proof}
Let $x\in \clh$ with $\|x \|=1$. Then we get
\begin{align*}
&\varphi(|\langle \displaystyle\sum_{k=1}^{n}(A_k^*X_kB_k)x, x \rangle|)\\
& \leq  \varphi(\displaystyle\sum_{k=1}^{n}|\langle X_kB_kx, A_kx \rangle|)\\
& \leq \varphi(\displaystyle\sum_{k=1}^{n}\|f(|X_k|)B_kx\|\|g(|X_k^*|)A_kx\|)~~(\mbox{Lemma}~ \ref{kitt88})\\
& \leq \varphi(\displaystyle\sum_{k=1}^{n}\langle f^2(|X_k|)B_kx, B_kx \rangle^{\frac{1}{2}}\langle g^2(|X_k^*|)A_kx, A_kx \rangle^{\frac{1}{2}})
\end{align*}
\begin{align*}
&\leq \varphi(\displaystyle\sum_{k=1}^{n}\frac{1}{2}(\langle B_k^*f^2(|X_k|)B_kx, x \rangle+ \langle A_k^*g^2(|X_k^*|)A_kx, x \rangle))\\
& \leq \frac{1}{2n}\displaystyle\sum_{k=1}^{n}(\varphi(\langle n B_k^*f^2(|X_k|)B_kx, x \rangle)+\varphi(n\langle A_k^*g^2(|X_k^*|)A_kx, x \rangle)\\
& \leq \frac{1}{2n}\displaystyle\sum_{k=1}^{n}\langle \varphi(nB_k^*f^2(|X_k|)B_k)x, x \rangle + \langle\varphi(nA_k^*g^2(|X_k^*|)A_k)x, x \rangle ~(\mbox{Lemma}~ \ref{opjensen}).
\end{align*}This completes the proof.
\end{proof}
Recently, a sharp upper bound of numerical radius inequality (\ref{kitt05}) is obtained by Bhunia and Paul (see Theorem 2.11, \cite{BHUKAL}). In fact, for $r\geq 1$, and $A_k, B_k, X_k\in \clb(\clh)$ they have proved the following inequality:
\begin{align}{\label{bhukalineq21}}
w^r(\displaystyle\sum_{k=1}^{n}A_k^*X_kB_k)& =\frac{n^{r-1}}{\sqrt{2}}w\Big(\displaystyle\sum_{k=1}^{n}((B_k^*f^2(|X_k|)B_k)^r+ i (A_k^*g^2(|X_k^*|)A_k))^r\Big).
\end{align}
Next we present a general numerical radius inequality, which extends the above inequality (\ref{bhukalineq21}).
\begin{thm}{\label{thmbhukal}}
Let $A_k, B_k, X_k\in \clb(\clh)$ for $k=1, 2,\ldots, n$. Suppose that $f, g: [0, \infty)\rightarrow [0, \infty)$ be two continuous functions such that $f(t)g(t)=t$ for all $t\in [0, \infty)$. Then for an Orlicz function $\varphi$ the following inequality holds:
\begin{align*}
\varphi\Big(w\Big(\displaystyle\sum_{k=1}^{n}{A_k^*X_kB_k}\Big)\Big)&\leq \frac{1}{\sqrt{2}n}w\Big(\displaystyle\sum_{k=1}^{n}(\varphi(nB_k^*f^2(|X_k|)B_k)+ i \varphi(nA_k^*g^2(|X_k^*|)A_k))\Big).
\end{align*}
\end{thm}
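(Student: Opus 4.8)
The plan is to run, pointwise in a unit vector $x\in\clh$, essentially the chain of estimates behind Theorem~\ref{thmsumaxb}, and to diverge from it only at the very last step. Where that argument bounds a sum of two quadratic forms by an operator norm, I will instead combine the two (non-negative) quadratic forms using the arithmetic--quadratic mean inequality $s+t\le\sqrt2\,\sqrt{s^{2}+t^{2}}$ and then recognise $\sqrt{s^{2}+t^{2}}$ as a pointwise value of the numerical radius of an operator of the shape $P+iQ$ with $P,Q\ge0$. This single substitution is what produces both the constant $1/\sqrt2$ and the imaginary part on the right-hand side of the claimed inequality.

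Concretely, fix a unit vector $x\in\clh$ and put $a_k=\|f(|X_k|)B_kx\|^{2}=\langle B_k^{*}f^{2}(|X_k|)B_kx,x\rangle\ge0$ and $b_k=\|g(|X_k^{*}|)A_kx\|^{2}=\langle A_k^{*}g^{2}(|X_k^{*}|)A_kx,x\rangle\ge0$. Using the triangle inequality and the monotonicity of $\varphi$, then Lemma~\ref{kitt88} applied to each $X_k$ with the vectors $B_kx$ and $A_kx$, and finally the scalar AM--GM inequality, one gets
\[
\varphi\!\left(\Big|\Big\langle \sum_{k=1}^{n} A_k^{*}X_kB_k\,x,x\Big\rangle\Big|\right)\le \varphi\!\left(\sum_{k=1}^{n} a_k^{1/2}b_k^{1/2}\right)\le \varphi\!\left(\frac{1}{2n}\sum_{k=1}^{n}\big(na_k+nb_k\big)\right).
\]
Next I would apply Lemma~\ref{lembohr} to the $2n$ non-negative numbers $na_1,nb_1,\dots,na_n,nb_n$ to push $\varphi$ through the average, and then the operator Jensen inequality (Lemma~\ref{opjensen}) to the positive operators $nB_k^{*}f^{2}(|X_k|)B_k$ and $nA_k^{*}g^{2}(|X_k^{*}|)A_k$. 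Writing $P_k=\varphi\big(nB_k^{*}f^{2}(|X_k|)B_k\big)$ and $Q_k=\varphi\big(nA_k^{*}g^{2}(|X_k^{*}|)A_k\big)$ (both positive), this yields
\[
\varphi\!\left(\Big|\Big\langle \sum_{k=1}^{n} A_k^{*}X_kB_k\,x,x\Big\rangle\Big|\right)\le \frac{1}{2n}\left(\Big\langle\sum_{k=1}^{n} P_k\,x,x\Big\rangle+\Big\langle\sum_{k=1}^{n} Q_k\,x,x\Big\rangle\right).
\]

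Now set $s=\langle\sum_{k}P_kx,x\rangle\ge0$ and $t=\langle\sum_{k}Q_kx,x\rangle\ge0$. By arithmetic--quadratic mean, $s+t\le\sqrt2\,\sqrt{s^{2}+t^{2}}=\sqrt2\,\big|\big\langle\sum_{k}(P_k+iQ_k)x,x\big\rangle\big|\le\sqrt2\,w\big(\sum_{k}(P_k+iQ_k)\big)$, so the last right-hand side above is at most $\frac{1}{\sqrt2\,n}\,w\big(\sum_{k}(P_k+iQ_k)\big)$. Since $\varphi$ is continuous and non-decreasing, taking the supremum over unit vectors $x$ converts the left-hand side into $\varphi\big(w(\sum_{k}A_k^{*}X_kB_k)\big)$, which is precisely the asserted bound; taking $\varphi(t)=t^{r}$ recovers the Bhunia--Paul inequality~(\ref{bhukalineq21}).

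The argument is essentially bookkeeping, and I expect the only steps needing care to be: (a) invoking Lemma~\ref{lembohr} on the full family of $2n$ scalars $\{na_k\}\cup\{nb_k\}$, rather than on $n$ of them, since this is what fixes the normalisation $1/(2n)$ and makes $nB_k^{*}f^{2}(|X_k|)B_k$ and $nA_k^{*}g^{2}(|X_k^{*}|)A_k$ the correct positive operators to feed into Lemma~\ref{opjensen}; and (b) the genuinely new step compared with Theorem~\ref{thmsumaxb}, namely combining the two non-negative numbers $s,t$ via $s+t\le\sqrt2\sqrt{s^{2}+t^{2}}$ and reassembling the result as the numerical radius of the single operator $\sum_{k}(P_k+iQ_k)$ instead of bounding the two contributions separately in operator norm.
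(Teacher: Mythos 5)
Your proposal is correct and follows essentially the same route as the paper: the same chain (triangle inequality, Lemma~\ref{kitt88}, AM--GM, Lemma~\ref{lembohr} on the $2n$ scalars, Lemma~\ref{opjensen}) leading to the bound $\frac{1}{2n}(s+t)$, followed by the same key step, since your arithmetic--quadratic mean inequality $s+t\le\sqrt2\sqrt{s^2+t^2}$ is exactly the paper's $|p+q|\le\sqrt2\,|p+iq|$ reassembled into the numerical radius of $\sum_k(P_k+iQ_k)$.
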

\begin{proof}
Let $x\in \clh$ with $\|x\|=1$, and $A_k, B_k, X_k\in \clb(\clh)$ for $k=1, 2,\ldots, n$. Using the similar steps used in previous Theorem \ref{thmsumaxb}, we obtain the following expression:
\begin{align*}
&\varphi(|\langle \displaystyle\sum_{k=1}^{n}(A_k^*X_kB_k)x, x \rangle|)\\
& \leq \frac{1}{2n}\displaystyle\sum_{k=1}^{n}\langle \varphi(nB_k^*f^2(|X_k|)B_k)x, x \rangle + \langle\varphi(nA_k^*g^2(|X_k^*|)A_k)x, x \rangle\\
& \leq \frac{1}{\sqrt{2}n}\Big|\displaystyle\sum_{k=1}^{n}\langle \varphi(nB_k^*f^2(|X_k|)B_k)x, x \rangle + i \displaystyle\sum_{k=1}^{n}\langle\varphi(nA_k^*g^2(|X_k^*|)A_k)x, x \rangle \Big|\\
& ~~~~~~(\mbox{since}~ |p+q| \leq \sqrt{2}|p+iq|~\mbox{for~ all~} p, q\in \mathbb{R})\\
& = \frac{1}{\sqrt{2}n} \Big|\langle \displaystyle\sum_{k=1}^{n}\{\varphi(nB_k^*f^2(|X_k|)B_k)+ i\varphi(nA_k^*g^2(|X_k^*|)A_k)\}x, x\rangle \Big|\\
& \leq \frac{1}{\sqrt{2}n}w\Big(\displaystyle\sum_{k=1}^{n}(\varphi(nB_k^*f^2(|X_k|)B_k)+  i \varphi(nA_k^*g^2(|X_k^*|)A_k))\Big).
\end{align*}This proves the desired inequality.
\end{proof}

In operator theory, and its allied areas it is required to determine the bounds of numerical radius of operator like $ATB+CSD$ for $A, B, C, D, S, T\in \clb(\clh)$. The upper bound of $w(ATB+CSD)$ was established by Kittaneh \cite{KITTSM05}:
\begin{align}{\label{ineqkitt05}}
w(ATB+CSD)&\leq \frac{1}{2}\|A|T^*|^{2(1-\alpha)}A^*+  B^*|T|^{2\alpha}B + C|S^*|^{2(1-\alpha)}C^*+D^*|S|^{2\alpha}D\|,
\end{align} where $\alpha\in [0, 1]$. An attempt has been taken to establish a general version of inequality (\ref{ineqkitt05}), which includes power type numerical radius inequality as well as provides new inequalities for a particular $\varphi$. Let us begin with the following statement:
\begin{thm}{\label{thmkitt05}}
Suppose that $A, B, C, D, S, T\in \clb(\clh)$ and $\varphi$ be an Orlicz function. Moreover, let $f, g: [0, \infty)\rightarrow [0, \infty)$ be two continuous functions such that $f(t)g(t)=t$ for all $t\in [0, \infty)$. Then we have
\begin{align*}
&\varphi(w(ATB+CSD))\\
&\leq \Big\|\displaystyle\int_{0}^{1}\varphi(t({B^*}f^2(|T|)B + Ag^2(|T^*|){A^*})+(1-t)({D^*}f^2(|S|)D+Cg^2(|S^*|){C^*}))dt\Big\|\\
&\leq \frac{1}{2}\|\varphi(Ag^2(|T^*|)A^*+ B^*f^2(|T|)B) + \varphi(Cg^2(|S^*|)C^* + D^*f^2(|S|)D)\|.
\end{align*}
\end{thm}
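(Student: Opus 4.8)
The plan is to follow the template of Theorems \ref{thmsumaxb} and \ref{thmbhukal}: fix a unit vector, separate the two summands by the triangle inequality, apply the generalized mixed Schwarz inequality (Lemma \ref{kitt88}) to $T$ and to $S$, use the arithmetic--geometric mean inequality, recognize the resulting squared norms as expectations of the four positive operators appearing on the right, and finally move $\varphi$ across by convexity together with the operator Jensen inequality (Lemma \ref{opjensen}), closing with $w(\cdot)\le\|\cdot\|$.

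Concretely, for a unit vector $x\in\clh$ one has
\[
|\langle(ATB+CSD)x,x\rangle|\le|\langle TBx,A^*x\rangle|+|\langle SDx,C^*x\rangle|,
\]
and Lemma \ref{kitt88}, applied to $T$ with the pair $(f,g)$ and then to $S$ with the same pair, bounds this by
\[
\|f(|T|)Bx\|\,\|g(|T^*|)A^*x\|+\|f(|S|)Dx\|\,\|g(|S^*|)C^*x\|.
\]
The AM--GM inequality dominates each product by half the sum of the two squares, and since $\|f(|T|)Bx\|^2=\langle B^*f^2(|T|)Bx,x\rangle$, $\|g(|T^*|)A^*x\|^2=\langle Ag^2(|T^*|)A^*x,x\rangle$ (and likewise for the $S$-block), setting $M_1=Ag^2(|T^*|)A^*$, $M_2=B^*f^2(|T|)B$, $M_3=Cg^2(|S^*|)C^*$, $M_4=D^*f^2(|S|)D$ gives
\[
|\langle(ATB+CSD)x,x\rangle|\le\frac{1}{2}\big(\langle M_1x,x\rangle+\langle M_2x,x\rangle+\langle M_3x,x\rangle+\langle M_4x,x\rangle\big),
\]
with all $M_i\ge0$.

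It remains to apply the non-decreasing convex function $\varphi$ to both sides, organize the right-hand side as a convex combination so that convexity of $\varphi$ (equivalently the Bohr-type Lemma \ref{lembohr}) yields a bound of the shape $\frac{1}{2}\sum_{i=1}^{4}\varphi(\langle M_ix,x\rangle)$, then use Lemma \ref{opjensen} to pull $\varphi$ inside each expectation, $\varphi(\langle M_ix,x\rangle)\le\langle\varphi(M_i)x,x\rangle$; taking the supremum over unit $x$ and using $w(\cdot)\le\|\cdot\|$ on the positive operator $\varphi(M_1)+\varphi(M_2)+\varphi(M_3)+\varphi(M_4)$ then produces the stated estimate. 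The step I expect to be the main obstacle is exactly this passage through $\varphi$: after AM--GM the four expectations carry total weight $2$ rather than $1$, so one must arrange the convexity/Jensen step (if need be invoking the scaling property $\varphi(\lambda t)\le\lambda\varphi(t)$ for $0<\lambda\le1$) so that the constant appearing in front of $\|\varphi(M_1)+\varphi(M_2)+\varphi(M_3)+\varphi(M_4)\|$ comes out to be precisely $\frac{1}{2}$; the remaining manipulations are routine transcriptions of the earlier proofs.
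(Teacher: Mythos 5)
Your route is exactly the paper's: triangle inequality, Lemma \ref{kitt88} applied to $T$ and to $S$, AM--GM, then convexity plus Lemma \ref{opjensen} and $w(\cdot)\le\|\cdot\|$. But the obstacle you single out at the end is not a bookkeeping issue to be ``arranged''; it is a genuine gap, and it cannot be closed the way you suggest. After AM--GM you must pass $\varphi$ through $\tfrac12(t_1+t_2+t_3+t_4)$ with $t_i=\langle M_ix,x\rangle$ in your notation, and the inequality you need,
\[
\varphi\Big(\tfrac12\sum_{i=1}^{4}t_i\Big)\le\tfrac12\sum_{i=1}^{4}\varphi(t_i),
\]
is false for a general Orlicz function: the weights sum to $2$, and for convex $\varphi$ with $\varphi(0)=0$ one has $\varphi(2t)\ge 2\varphi(t)$, so the scaling property $\varphi(\lambda t)\le\lambda\varphi(t)$ you propose to invoke (valid only for $0<\lambda\le1$) points in the wrong direction. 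Concretely, $\varphi(t)=t^2$ and $t_1=\cdots=t_4=1$ gives $4\le2$. The paper's own proof performs exactly this step without comment, so it shares the gap; indeed the statement itself fails on the one-dimensional example $A=B=C=D=S=T=1$, $f(t)=g(t)=t^{1/2}$, $\varphi(t)=t^2$, where the left side is $\varphi(2)=4$ and the right side is $\tfrac12\cdot4\,\varphi(1)=2$.

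What your argument does prove, if you repair the convexity step the way the paper does in Theorem \ref{thmsumaxb} (write $\tfrac12\sum_i t_i=\tfrac14\sum_i(2t_i)$ and apply Jensen/Lemma \ref{lembohr} with weights $\tfrac14$ before using Lemma \ref{opjensen}), is
\[
\varphi(w(ATB+CSD))\le\frac14\big\|\varphi(2Ag^2(|T^*|)A^*)+\varphi(2B^*f^2(|T|)B)+\varphi(2Cg^2(|S^*|)C^*)+\varphi(2D^*f^2(|S|)D)\big\|,
\]
which coincides with the stated bound only when $\varphi$ is linear (recovering Kittaneh's inequality) and is the honest conclusion of this method for general $\varphi$.
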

\begin{proof}
Let $x\in \clh$ with $\|x\|=1$. Applying the AM-GM inequality and convexity of $\varphi$, we get
\begin{align*}
&\varphi(|\langle (ATB+CSD)x, x \rangle|)\\
& \leq \varphi(|\langle TBx, A^*x \rangle|+ |\langle SDx, C^*x \rangle|)\\
& \leq \varphi(\langle f^2(|T|)Bx, Bx \rangle^{\frac{1}{2}}\langle g^2(|T^*|)A^*x, A^*x \rangle^{\frac{1}{2}} \\
& ~~~~\hspace{0.25cm}+ \langle f^2(|S|)Dx, Dx \rangle^{\frac{1}{2}}\langle g^2(|S^*|)C^*x, C^*x \rangle^{\frac{1}{2}})~~(\mbox{Lemma}~ \ref{kitt88})\\
&\leq \varphi(\frac{1}{2}(\langle B^*f^2(|T|)Bx, x \rangle+ \langle Ag^2(|T^*|)A^*x, x \rangle)+ \frac{1}{2}(\langle D^*f^2(|S|)Dx, x \rangle + \langle Cg^2(|S^*|)C^*x, x \rangle)).
\end{align*}
Using inequality (\ref{HERMITEHAD}), we get
\begin{align*}
&\varphi(|\langle (ATB+CSD)x, x \rangle|)\\
& \leq \displaystyle\int_{0}^{1}\varphi\Big(t\langle \{Ag^2(|T^*|)A^*+ B^*f^2(|T|)B\}x, x\rangle+(1-t)\langle \{Cg^2(|S^*|)C^* + D^*f^2(|S|)D\}x, x \rangle \Big)dt\\
& \leq \frac{1}{2}\Big(\big\langle \big\{\varphi(B^*f^2(|T|)B+Ag^2(|T^*|)A^*)
+\varphi(D^*f^2(|S|)D+Cg^2(|S^*|)C^*)\big\}x, x \big\rangle\Big).
\end{align*}
Taking supremum over $\|x\|=1$ on both sides of the above, one gets the desired inequality. This proves the theorem.
\end{proof}
\begin{cor}Choose $f(t)=t^\alpha$ and $g(t)=t^{1-\alpha}$ for $t\geq 0$ and $\alpha\in [0, 1]$. Suppose that $\varphi(t)=t$ for $t\geq 0$. Then we have Kittaneh inequality \cite{KITTSM05} as below:
\begin{align*}
w(ATB+CSD)
&\leq \frac{1}{2}\|A|T^*|^{2(1-\alpha)}A^*+  B^*|T|^{2\alpha}B + C|S^*|^{2(1-\alpha)}C^*+D^*|S|^{2\alpha}D\|.
\end{align*}
\end{cor}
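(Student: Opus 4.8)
The plan is to run the scheme already used for Theorems~\ref{thmsumaxb} and~\ref{thmbhukal}: fix a unit vector $x\in\clh$, bound the scalar $\varphi(|\langle(ATB+CSD)x,x\rangle|)$ from above by $\frac{1}{2}\langle Px,x\rangle$ for a suitable positive operator $P$, and then pass to the supremum over $x$. First I would separate the two summands and push the outer operators across the inner product,
\[
|\langle(ATB+CSD)x,x\rangle|\leq|\langle TBx,A^*x\rangle|+|\langle SDx,C^*x\rangle|,
\]
and apply Kittaneh's mixed Cauchy--Schwarz inequality (Lemma~\ref{kitt88}) with the prescribed pair $(f,g)$, using $f(t)g(t)=t$:
\[
|\langle TBx,A^*x\rangle|\leq\|f(|T|)Bx\|\,\|g(|T^*|)A^*x\|=\langle B^*f^2(|T|)Bx,x\rangle^{1/2}\,\langle Ag^2(|T^*|)A^*x,x\rangle^{1/2},
\]
and similarly for $|\langle SDx,C^*x\rangle|$ with $B^*f^2(|T|)B,\ Ag^2(|T^*|)A^*$ replaced by $D^*f^2(|S|)D,\ Cg^2(|S^*|)C^*$. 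Abbreviate $P_1=B^*f^2(|T|)B$, $P_2=Ag^2(|T^*|)A^*$, $P_3=D^*f^2(|S|)D$, $P_4=Cg^2(|S^*|)C^*$; each $P_i$ is positive, so $\varphi(P_i)$ is defined by functional calculus.

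Since $\varphi$ is non-decreasing, AM--GM on each of the two products gives
\[
\varphi(|\langle(ATB+CSD)x,x\rangle|)\leq\varphi\Big(\frac{1}{2}\langle(P_1+P_2)x,x\rangle+\frac{1}{2}\langle(P_3+P_4)x,x\rangle\Big).
\]
Next I would use convexity of $\varphi$ --- in the grouped, $n=4$ form of Lemma~\ref{lembohr}, together with $\varphi(0)=0$ and $\varphi(\lambda u)\leq\lambda\varphi(u)$ for $\lambda\in[0,1]$ --- to bring $\varphi$ past the two halved sums, obtaining
\[
\varphi(|\langle(ATB+CSD)x,x\rangle|)\leq\frac{1}{2}\Big(\varphi(\langle P_1x,x\rangle)+\varphi(\langle P_2x,x\rangle)+\varphi(\langle P_3x,x\rangle)+\varphi(\langle P_4x,x\rangle)\Big).
\]
Then the operator Jensen inequality (Lemma~\ref{opjensen}), applicable because every $P_i\geq 0$ and $\varphi$ is convex on $[0,\infty]$, yields $\varphi(\langle P_ix,x\rangle)\leq\langle\varphi(P_i)x,x\rangle$ for each $i$. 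Summing, taking the supremum over unit vectors $x$, and using $w(\,\cdot\,)\leq\|\,\cdot\,\|$ for the positive operator $\varphi(P_1)+\varphi(P_2)+\varphi(P_3)+\varphi(P_4)$ gives the asserted bound.

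The step I expect to be the main obstacle is precisely the passage of $\varphi$ inside the two halved sums: one has to organise the four nonnegative numbers $\langle P_ix,x\rangle$ into a bona fide convex combination so that each $\varphi(\langle P_ix,x\rangle)$ appears with coefficient exactly $\frac{1}{2}$, and this is the only point where more than the plain ``mixed Cauchy--Schwarz $\to$ AM--GM $\to$ operator Jensen'' routine of the preceding theorems is required; the interplay between $\varphi(0)=0$ and convexity must be handled with care here. Once the inequality is in hand, taking $\varphi(t)=t^r$ with $r\geq 1$, and then $f(t)=t^\alpha,\ g(t)=t^{1-\alpha}$, recovers the power-type bound for $w^r(ATB+CSD)$ and Kittaneh's inequality~(\ref{ineqkitt05}).
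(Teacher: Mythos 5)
Your proposal is correct for this corollary and follows essentially the paper's own route: the paper obtains the statement simply by specializing Theorem~\ref{thmkitt05} to $\varphi(t)=t$, $f(t)=t^{\alpha}$, $g(t)=t^{1-\alpha}$, and your argument is a re-derivation of that theorem by the identical chain (split the inner product, Lemma~\ref{kitt88}, AM--GM, convexity, operator Jensen), followed by the same specialization. One remark is worth recording: the obstacle you flag is real. For a nonlinear Orlicz function the step
$\varphi\bigl(\tfrac12(a+b)+\tfrac12(c+d)\bigr)\le\tfrac12\bigl(\varphi(a)+\varphi(b)+\varphi(c)+\varphi(d)\bigr)$
is false, since the four nonnegative numbers enter with total weight $2$ rather than $1$; already $\varphi(t)=t^{2}$ with $a=b=c=d=1$ gives $4\not\le 2$. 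The paper's proof of Theorem~\ref{thmkitt05} passes over this point silently, so the general Orlicz form of the bound is not actually established there. For the corollary at hand, however, $\varphi(t)=t$ is linear, the problematic step is an identity, and your proof (like the paper's) closes and yields Kittaneh's inequality~(\ref{ineqkitt05}).
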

Now it is planned to establish a new numerical radius inequality in a more general setting. Consequently, we derive several recent improved numerical radius inequalities in one frame.
\begin{thm}{\label{thmphiwa2}}
Let $A\in \clb(\clh)$, and $\varphi$ be an Orlicz function. Then
\begin{align}{\label{ineqphiwa2}}
&\varphi(w^2(A))\\
& \leq \alpha\Big\|\frac{1}{4}(\varphi(|A|^{2})+  \varphi(|A^*|^{2}))+ \frac{1}{2}\varphi(|\mathfrak{R}(|A||A^*|)|)\Big\|+ \frac{(1-\alpha)}{2}\Big\|\displaystyle\int_{0}^{1}\varphi(t|A|^{2}+(1-t)|A^*|^{2})dt \Big\|\nonumber\\
& \hspace{0.5cm}+\frac{1-\alpha}{2}\varphi(w(A^2))\nonumber\\
&\leq \frac{1}{4}\|\varphi(|A|^{2})+  \varphi(|A^*|^{2})\|+ \frac{\alpha}{2}\|\varphi(|\mathfrak{R}(|A||A^*|)|)\|+ \frac{1-\alpha}{2}\varphi(w(A^2)),\nonumber
\end{align}where $\alpha\in [0, 1]$.
\end{thm}
\begin{proof}
Let $A\in \clb(\clh)$ and $x\in \clh$ with $\|x\|=1$. Then we have
\begin{align*}
&\varphi(|\langle Ax, x \rangle|^2)\\
& = \varphi(\alpha|\langle Ax, x \rangle|^2+ (1-\alpha)|\langle Ax, x \rangle|^2)\\
&\leq \alpha \varphi((\langle |A|x, x \rangle^{\frac{1}{2}} \langle |A^*|x, x \rangle^{\frac{1}{2}})^2) + (1-\alpha)\varphi(|\langle Ax, x \rangle|^2).
\end{align*}
Now we have the following
\begin{align*}
& \alpha \varphi((\langle |A|x, x \rangle^{\frac{1}{2}} \langle |A^*|x, x \rangle^{\frac{1}{2}})^2)\\
& \leq \alpha\varphi\Big(\Big(\frac{\langle |A|x, x \rangle + \langle |A^*|x, x \rangle}{2}\Big)^2\Big)\\
& \leq \alpha\varphi\Big(\frac{\langle (|A|+|A^*|)^2x, x \rangle}{4}\Big)\\
& \leq \alpha\varphi\Big(\frac{\langle (|A|^2+|A^*|^2+ 2|\mathfrak{R}(|A||A^*|)|)x, x \rangle}{4}\Big)\\
&\leq \alpha\Big(\langle\big(\frac{1}{4}\varphi (|A|^2)+\frac{1}{4}\varphi(|A^*|^2)+ \frac{1}{2}\varphi(|\mathfrak{R}(|A||A^*|)|)\big)x, x \rangle\Big).
\end{align*}
On the other hand, by Theorem \ref{improvmkitt} we get
\begin{align}
&(1-\alpha)\varphi(|\langle Ax, x \rangle|^2)\nonumber\\
& \leq \frac{(1-\alpha)}{2}\Big\langle\displaystyle\int_{0}^{1} \{\varphi(t|A|^{2}+(1-t)|A^*|^{2})dt\}x, x\Big\rangle+ \frac{(1-\alpha)}{2}\varphi(|\langle A^2x, x \rangle|) \nonumber\\
& \leq  \frac{(1-\alpha)}{4}(\langle \varphi(|A|^{2})x, x \rangle + \langle \varphi(|A^*|^{2})x, x \rangle)+ \frac{(1-\alpha)}{2}\varphi(|\langle A^2x, x \rangle|)~~(\mbox{Lemma}~ \ref{opjensen}).\nonumber
\end{align}
Therefore, we have
\begin{align*}
&\varphi(|\langle Ax, x \rangle|^2)\\
&\leq \alpha\Big(\langle\big(\frac{1}{4}\varphi (|A|^2)+\frac{1}{4}\varphi(|A^*|^2)+ \frac{1}{2}\varphi(|\mathfrak{R}(|A||A^*|)|)\big)x, x \rangle\Big)\\
& \hspace{0.5cm}+\frac{(1-\alpha)}{2}\Big\langle\displaystyle\int_{0}^{1} \{\varphi(t|A|^{2}+(1-t)|A^*|^{2})dt\}x, x\Big\rangle+ \frac{(1-\alpha)}{2}\varphi(|\langle A^2x, x \rangle|)\\
&\leq \frac{1}{4}(\langle \varphi(|A|^2)x, x \rangle + \langle \varphi(|A^*|^2)x, x \rangle)+ \frac{\alpha}{2}(\langle \varphi(|\mathfrak{R}(|A||A^*|)|)x, x \rangle)+\frac{1-\alpha}{2}\varphi(|\langle A^2x, x\rangle|).
\end{align*}
This proves the desired inequality.
\end{proof}

An Orlicz function $\varphi$ is said to be \emph{sub-multiplicative} if $\varphi(uv)\leq \varphi(u)\varphi(v)$ for $u, v\geq 0$, holds. In our next result, we find an upper bound by using the \emph{sub-multiplicative} property of $\varphi$.
\begin{thm}{\label{thmphiwa2new}}
Let $A\in \clb(\clh)$, and Orlicz function $\varphi$ is sub-multiplicative. Then for $\alpha\in [0, 1]$
\begin{align}{\label{ineqphiwa2new}}
\varphi(w^2(A))&\leq \frac{1}{4}\|\varphi(|A|^{2})+  \varphi(|A^*|^{2})\|+ \frac{\alpha}{2}\|\varphi(|A|)\| \|\varphi(|A^*|)\|)+ \frac{1-\alpha}{2}\varphi(w(A^2)).
\end{align}
\end{thm}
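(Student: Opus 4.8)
The plan is to obtain Theorem \ref{thmphiwa2new} as a quick consequence of Theorem \ref{thmphiwa2} together with the sub-multiplicativity hypothesis. Theorem \ref{thmphiwa2} already gives, for every $\alpha\in[0,1]$,
\[
\varphi(w^2(A))\leq \frac14\|\varphi(|A|^2)+\varphi(|A^*|^2)\|+\frac{\alpha}{2}\|\varphi(|\mathfrak{R}(|A||A^*|)|)\|+\frac{1-\alpha}{2}\varphi(w(A^2)),
\]
so it suffices to establish the single estimate
\[
\|\varphi(|\mathfrak{R}(|A||A^*|)|)\|\leq \|\varphi(|A|)\|\,\|\varphi(|A^*|)\|
\]
and substitute it into the line above.

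To prove this estimate I would first record the elementary identity $\|\varphi(T)\|=\varphi(\|T\|)$ valid for any positive $T\in\clb(\clh)$ and any Orlicz function $\varphi$. This is immediate from the functional calculus recalled in Section 2: the spectral mapping theorem gives $\sigma(\varphi(T))=\varphi(\sigma(T))$, and since $\varphi\geq 0$ is continuous and non-decreasing while $\sigma(T)$ is a compact subset of $[0,\|T\|]$ with $\max\sigma(T)=\|T\|$ (for a positive operator the norm equals the largest spectral value), we get $\|\varphi(T)\|=\sup_{\lambda\in\sigma(T)}\varphi(\lambda)=\varphi(\max\sigma(T))=\varphi(\|T\|)$. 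In particular $\|\varphi(|A|)\|=\varphi(\|A\|)=\|\varphi(|A^*|)\|$.

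Next I would estimate the norm of $\mathfrak{R}(|A||A^*|)=\tfrac12(|A||A^*|+|A^*||A|)$: using the triangle inequality, sub-multiplicativity of the operator norm, and $\||A|\|=\||A^*|\|=\|A\|$,
\[
\||\mathfrak{R}(|A||A^*|)|\|=\|\mathfrak{R}(|A||A^*|)\|\leq \||A||A^*|\|\leq \||A|\|\,\||A^*|\|=\|A\|^2 .
\]
Applying the norm identity, then monotonicity of $\varphi$, then its sub-multiplicativity,
\[
\|\varphi(|\mathfrak{R}(|A||A^*|)|)\|=\varphi\big(\|\mathfrak{R}(|A||A^*|)\|\big)\leq \varphi(\|A\|^2)\leq \varphi(\|A\|)\varphi(\|A\|)=\|\varphi(|A|)\|\,\|\varphi(|A^*|)\|,
\]
which is exactly what was needed; inserting it into the inequality from Theorem \ref{thmphiwa2} yields \eqref{ineqphiwa2new}.

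I do not expect a serious obstacle. The only point that deserves care is the identity $\|\varphi(T)\|=\varphi(\|T\|)$, in particular checking that $\varphi$, a priori defined on $[0,\infty]$, restricts to a genuine continuous non-decreasing function on the compact interval $[0,\|T\|]$ so that the spectral mapping theorem applies verbatim; once that is granted, everything else is bookkeeping. If one prefers to avoid quoting Theorem \ref{thmphiwa2} as a black box, the same argument can be run self-containedly: repeat the proof of Theorem \ref{thmphiwa2} up to its final pointwise estimate, bound $\langle\varphi(|\mathfrak{R}(|A||A^*|)|)x,x\rangle\leq\|\varphi(|\mathfrak{R}(|A||A^*|)|)\|$ for each unit vector $x$, apply the norm inequality just established, and then take the supremum over $\|x\|=1$.
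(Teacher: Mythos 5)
Your argument is correct, but it takes a genuinely different route from the paper's. The paper does not invoke Theorem \ref{thmphiwa2} as a black box; it reruns the pointwise computation: starting from the intermediate estimate
$\varphi(|\langle Ax,x\rangle|^2)\le\alpha\varphi\big(\big(\tfrac{\langle|A|x,x\rangle+\langle|A^*|x,x\rangle}{2}\big)^2\big)+\cdots$,
it expands the square, uses convexity of $\varphi$ to split the first term into three pieces, applies the scalar sub-multiplicativity $\varphi(ab)\le\varphi(a)\varphi(b)$ to the cross term $\varphi(\langle|A|x,x\rangle\langle|A^*|x,x\rangle)$, and then Lemma \ref{opjensen} to each factor, arriving at $\tfrac{\alpha}{2}\langle\varphi(|A|)x,x\rangle\langle\varphi(|A^*|)x,x\rangle$ before taking the supremum over unit vectors. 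You instead keep Theorem \ref{thmphiwa2} intact and prove the single operator-norm estimate $\|\varphi(|\mathfrak{R}(|A||A^*|)|)\|\le\|\varphi(|A|)\|\,\|\varphi(|A^*|)\|$ via the spectral identity $\|\varphi(T)\|=\varphi(\|T\|)$ for positive $T$, the bound $\|\mathfrak{R}(|A||A^*|)\|\le\||A||A^*|\|\le\|A\|^2$, and sub-multiplicativity of $\varphi$ applied at the single scalar $\|A\|$; all of these steps check out. Your version is shorter, makes the logical dependence explicit (for sub-multiplicative $\varphi$ the new bound is a formal weakening of Theorem \ref{thmphiwa2}), and localizes the use of sub-multiplicativity to one point; the paper's version stays at the level of quadratic forms in a unit vector and never needs the spectral-mapping identity $\|\varphi(T)\|=\varphi(\|T\|)$. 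Either proof is acceptable.
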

\begin{proof}
Let $A\in \clb(\clh)$ and $x\in \clh$. We follow the similar steps as in Theorem \ref{thmphiwa2}. Then we have
\begin{align}{\label{ineqdiff}}
&\varphi(|\langle Ax, x \rangle|^2) \nonumber\\
& \leq \alpha\varphi\Big(\Big(\frac{\langle |A|x, x \rangle + \langle |A^*|x, x \rangle}{2}\Big)^2\Big) \nonumber\\
& ~~~~~+ \frac{1-\alpha}{4}(\langle \varphi(|A|^2)x, x \rangle + \langle \varphi(|A^*|^2)x, x \rangle)+ \frac{1-\alpha}{2}\varphi(|\langle A^2x, x \rangle|)
\end{align}
Now applying the convexity and sub-multiplicative property of $\varphi$, the first term of RHS of inequality (\ref{ineqdiff}) gives the following:
\begin{align*}
&\alpha\varphi\Big(\Big(\frac{\langle |A|x, x \rangle + \langle |A^*|x, x \rangle}{2}\Big)^2\Big)\\
& \leq \frac{\alpha}{4} \varphi\Big(\langle |A|x, x \rangle^2\Big) + \frac{\alpha}{4}\varphi\Big(\langle |A^*|x, x \rangle^2\Big) + \frac{\alpha}{2} \varphi\Big(\langle |A|x, x \rangle \langle |A^*|x, x \rangle\Big)\\
& \leq \frac{\alpha}{4}\Big\{\langle \varphi(|A|^2)x, x \rangle + \langle \varphi(|A^*|^2)x, x \rangle\Big\}+ \frac{\alpha}{2} \varphi\Big(\langle |A|x, x \rangle\Big) \varphi\Big(\langle |A^*|x, x \rangle\Big)\\
& \leq \frac{\alpha}{4}\Big\{\langle \varphi(|A|^2)x, x \rangle + \langle \varphi(|A^*|^2)x, x \rangle\Big\}+ \frac{\alpha}{2} \langle \varphi(|A|)x, x \rangle \langle \varphi(|A^*|)x, x \rangle.
\end{align*}
Hence by plugging this upper bound in inequality (\ref{ineqdiff}), and then taking supremum over $\|x\|=1$, we get the desired inequality.
\end{proof}

In the following result, we have obtained a generalized version of Theorem \ref{thmphiwa2}.

\begin{thm}{\label{thmphiwa2fin}}
Let $A_i\in \clb(\clh)$ for $i=1, 2,\ldots, n$. Then for Orlicz function $\varphi$ the following inequality holds:
\begin{align*}
&\varphi\Big(w^2\Big(\displaystyle\sum_{i=1}^{n}{A_i}\Big)\Big)\\
& \leq \frac{\alpha}{n}\Big\|\displaystyle\sum_{i=1}^{n}\frac{1}{4}(\varphi(|nA_i|^{2})+  \varphi(|nA_i^*|^{2}))+ \frac{1}{2}\varphi(|\mathfrak{R}(|A||A^*|)|)\Big\| \\
& \hspace{0.5cm}+\frac{(1-\alpha)}{2n}\Big\|\displaystyle\sum_{i=1}^{n}\displaystyle\int_{0}^{1}\varphi(t|nA_i|^{2}+(1-t)|nA_i^*|^{2})dt \Big\|+\frac{1-\alpha}{2n}\displaystyle\sum_{i=1}^{n}\varphi(n^2w(A_i^2))\\
&\leq \frac{1}{4n}\|\displaystyle\sum_{i=1}^{n}(\varphi(|nA_i|^{2})+  \varphi(|nA_i^*|^{2}))\|+ \frac{\alpha}{2n}\|\displaystyle\sum_{i=1}^{n}\varphi\Big(\big|\mathfrak{R}(n^2|A_i||A_i^*|)\big|\Big)\|+ \frac{1-\alpha}{2n}\displaystyle\sum_{i=1}^{n}\varphi(n^2w(A_i^2)).
\end{align*}
\end{thm}
\begin{proof}
Let $x\in \clh$ with $\|x\|=1$, and $A_i\in \clb(\clh)$ for $i=1, 2,\ldots, n$. Then using the steps applied in Theorem \ref{thmphiwa2} and Lemma \ref{lembohr}, we have
\begin{align*}
&\varphi\Big(\Big|\Big\langle \Big(\displaystyle\sum_{i=1}^{n}{A_i}\Big) x, x \Big\rangle \Big|^2\Big)
= \varphi\Big(\Big|\displaystyle\sum_{i=1}^{n}\Big\langle {A_i} x, x \Big\rangle \Big|^2\Big)
\leq \varphi\Big(\Big(\displaystyle\sum_{i=1}^{n}\Big|\Big\langle {A_i} x, x \Big\rangle \Big|\Big)^2\Big)\\
& \leq \varphi\Big(n\displaystyle\sum_{i=1}^{n}\Big|\Big\langle {A_i} x, x \Big\rangle \Big|^2\Big) \leq \frac{1}{n}\displaystyle\sum_{i=1}^{n}\varphi\Big(n^2\Big|\Big\langle {A_i} x, x \Big\rangle \Big|^2\Big)\\
&\leq \frac{\alpha}{n}\displaystyle\sum_{i=1}^{n}\Big(\langle\big(\frac{1}{4}\varphi (|nA_i|^2)+\frac{1}{4}\varphi(|nA_i^*|^2)+ \frac{1}{2}\varphi(|\mathfrak{R}(n^2|A_i||A_i^*|)|)\big)x, x \rangle\Big)\\
& \hspace{0.5cm}+\frac{(1-\alpha)}{2n}\displaystyle\sum_{i=1}^{n}\Big\langle\displaystyle\int_{0}^{1} \{\varphi(t|nA_i|^{2}+(1-t)|nA_i^*|^{2})dt\}x, x\Big\rangle+ \frac{(1-\alpha)}{2n}\displaystyle\sum_{i=1}^{n}\varphi(n^2|\langle A_i^2x, x \rangle|)\\
& \leq \frac{1}{4n}( \displaystyle\sum_{i=1}^{n}\{\langle\varphi(|nA_i|^2)x, x \rangle + \langle \varphi(|nA_i^*|^2)\}x, x \rangle)+ \frac{\alpha}{2n}(\langle \displaystyle\sum_{i=1}^{n} \varphi\Big(\big|\mathfrak{R}(n^2|A_i||A^*_i|)\big|\Big)x, x \rangle)\\
& ~~~~+\frac{1-\alpha}{2n}\displaystyle\sum_{i=1}^{n}\varphi(n^2|\langle A_i^2x, x\rangle|).
\end{align*}Hence the desired inequality.
\end{proof}
In the next result, another numerical radius inequality is obtained by using Hermite-Hadamard inequality (\ref{HERMITEHAD}) and Orlicz function $\varphi$.
\begin{thm}{\label{thmdiflook}}
Let $A\in \clb(\clh)$ and $\varphi$ be an Orlicz function. Then the following inequality holds:
\begin{align*}
&\varphi(w^2(A))\\
& \leq \frac{1}{2}\Big\|\displaystyle\int_{0}^{1} \varphi(t|A|^{2}+(1-t)|A^*|^{2})dt \Big \|+ \frac{1}{2}\varphi(w(|A||A^*|))\\
&\leq \frac{1}{4}\|(\varphi(|A|^{2})+  \varphi(|A^*|^{2}))\|+ \frac{1}{2}\varphi(w(|A||A^*|)).
\end{align*}
\end{thm}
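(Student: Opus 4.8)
The plan is to run essentially the same machinery that proves Theorem~\ref{improvmkitt} (mixed Cauchy--Schwarz $+$ Buzano $+$ AM--GM $+$ convexity $+$ operator Jensen), but to \emph{reorder} the first two steps: instead of applying Buzano's inequality directly to the vectors $Ax$ and $A^*x$ (which is what produces the term $w(A^2)$ there), I would first apply the mixed Cauchy--Schwarz inequality to land on the two nonnegative scalars $\langle|A|x,x\rangle$ and $\langle|A^*|x,x\rangle$, and only then apply Buzano to \emph{that} pair. This is exactly what turns the cross term into $w(|A||A^*|)$.

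In detail, I would fix a unit vector $x\in\clh$ and proceed as follows. By Lemma~\ref{mixedcs} with $\alpha=\tfrac12$ one has $|\langle Ax,x\rangle|^2\le\langle|A|x,x\rangle\,\langle|A^*|x,x\rangle$. Since $\langle|A|x,x\rangle,\langle|A^*|x,x\rangle\ge 0$ and $\|x\|=1$, Lemma~\ref{buzano} (with unit vector $e=x$ and vectors $|A|x,\ |A^*|x$) gives
\[
\langle|A|x,x\rangle\,\langle|A^*|x,x\rangle\;=\;\bigl|\langle|A|x,x\rangle\,\langle x,|A^*|x\rangle\bigr|\;\le\;\tfrac12\Bigl(\||A|x\|\,\||A^*|x\|+\bigl|\langle|A|x,|A^*|x\rangle\bigr|\Bigr).
\]
Here $\||A|x\|\,\||A^*|x\|=\langle|A|^2x,x\rangle^{1/2}\langle|A^*|^2x,x\rangle^{1/2}$, and $\bigl|\langle|A|x,|A^*|x\rangle\bigr|=\bigl|\langle|A||A^*|x,x\rangle\bigr|$ because $\langle|A|x,|A^*|x\rangle=\overline{\langle|A||A^*|x,x\rangle}$. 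Now apply $\varphi$: using that $\varphi$ is nondecreasing and convex with $\varphi(0)=0$, together with AM--GM on $\langle|A|^2x,x\rangle^{1/2}\langle|A^*|^2x,x\rangle^{1/2}$, one obtains
\[
\varphi(|\langle Ax,x\rangle|^2)\le\tfrac12\varphi\!\Bigl(\tfrac{\langle|A|^2x,x\rangle+\langle|A^*|^2x,x\rangle}{2}\Bigr)+\tfrac12\varphi\bigl(|\langle|A||A^*|x,x\rangle|\bigr)\le\tfrac14\bigl(\varphi(\langle|A|^2x,x\rangle)+\varphi(\langle|A^*|^2x,x\rangle)\bigr)+\tfrac12\varphi\bigl(|\langle|A||A^*|x,x\rangle|\bigr),
\]
and then Lemma~\ref{opjensen} applied to the positive operators $|A|^2$ and $|A^*|^2$ bounds the first two terms by $\tfrac14\bigl(\langle\varphi(|A|^2)x,x\rangle+\langle\varphi(|A^*|^2)x,x\rangle\bigr)$.

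Finally I would take the supremum over all unit vectors $x$. On the left, since $\varphi$ is continuous and nondecreasing, $\sup_{\|x\|=1}\varphi(|\langle Ax,x\rangle|^2)=\varphi(w^2(A))$; on the right, the positive operator $\varphi(|A|^2)+\varphi(|A^*|^2)$ contributes its norm, while $\sup_{\|x\|=1}\varphi(|\langle|A||A^*|x,x\rangle|)=\varphi(w(|A||A^*|))$, again by monotonicity and continuity of $\varphi$. This yields $\varphi(w^2(A))\le\tfrac14\|\varphi(|A|^2)+\varphi(|A^*|^2)\|+\tfrac12\varphi(w(|A||A^*|))$, as claimed. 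There is no deep obstacle here; the only point that needs care is the bookkeeping of the Buzano step --- namely recognizing that it should be fed the scalars coming out of mixed Cauchy--Schwarz rather than $Ax,A^*x$, and the identity $\langle|A|x,|A^*|x\rangle=\overline{\langle|A||A^*|x,x\rangle}$ that makes the cross term $w(|A||A^*|)$ rather than $w(A^2)$.
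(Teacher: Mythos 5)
Your proposal is correct and follows essentially the same route as the paper's own proof: mixed Cauchy--Schwarz with $\alpha=\tfrac12$, then Buzano applied to the vectors $|A|x$ and $|A^*|x$ with $e=x$, followed by convexity, AM--GM, operator Jensen, and the supremum. The only difference is the immaterial ordering of $|A|$ and $|A^*|$ in the Buzano step; the identity $|\langle|A|x,|A^*|x\rangle|=|\langle|A||A^*|x,x\rangle|$ you isolate is exactly the point the paper uses to produce the term $w(|A||A^*|)$.
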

\begin{proof}
Suppose that $x\in \clh$ with $\|x\|=1$. Then one gets
\begin{align*}
&\varphi(|\langle Ax, x \rangle|^2)\\
&\leq \varphi(\langle |A|x, x \rangle \langle |A^*|x, x \rangle)\\
& = \varphi(\langle |A^*|x, x \rangle \langle x, |A|x \rangle)\\
&\leq \varphi(\frac{1}{2}(\||A^*|x\| \||A|x\| + |\langle |A^*|x, |A|x \rangle|))\\
& \leq \frac{1}{2}\varphi(\langle |A^*|x, |A^*|x \rangle^{\frac{1}{2}}\langle |A|x, |A|x \rangle^{\frac{1}{2}})+\frac{1}{2}\varphi(|\langle |A||A^*|x, x \rangle|)\\
& \leq \frac{1}{2}\varphi(\frac{\langle |A^*|^2x, x \rangle + \langle |A|^2x, x \rangle}{2})+ \frac{1}{2}\varphi(|\langle |A||A^*|x, x \rangle|).
\end{align*}
Applying the Hermite-Hadamard inequality (\ref{HERMITEHAD}) in above, one immediately gets
\begin{align*}
&\varphi(|\langle Ax, x \rangle|^2)\\
& \leq \frac{1}{2}\Big\langle\displaystyle\int_{0}^{1} \{\varphi(t|A|^{2}+(1-t)|A^*|^{2})dt\}x, x\Big\rangle+ \frac{1}{2}\varphi(|\langle |A||A^*|x, x \rangle|)\\
& \leq  \frac{1}{4}(\langle \varphi(|A^*|^2)x, x \rangle + \langle \varphi(|A|^2)x, x \rangle)+ \frac{1}{2}\varphi(|\langle |A||A^*|x, x \rangle|).
\end{align*}
This proves the desired inequality.
\end{proof}

\section{Applications to block matrices}

Let $P, Q\in \clb(\clh)$. Then we compute numerical radius of some $2\times2$ off-diagonal operator matrix in this section. In fact, we establish an generalized upper bound for $\varphi(w(A))$, where $A=\begin{bmatrix}O & P\\Q & O\end{bmatrix}\in \clb(\clh \oplus \clh)$. Our result is stated as below.
\begin{thm}{\label{thmblock}}
Let $P, Q \in \clb(\clh)$, and $\phi $ be an Orlicz function. If $A=\begin{bmatrix}O & P\\Q & O\end{bmatrix}\in \clb(\clh \oplus \clh)$, then
\begin{align*}{\label{ineqblock}}
\varphi(w^2(A))& \leq \frac{1}{4}\Big\|\begin{bmatrix}\varphi(|Q|^2)+ \varphi(|P^*|^2)& O\\O & \varphi(|P|^2)+ \varphi(|Q^*|^2)\end{bmatrix}\Big\| \nonumber\\
&~~~+\frac{\alpha}{2}\Big\|\begin{bmatrix}\varphi(|\mathfrak{R}(|Q||P^*|)|)& O\\O & \varphi(|\mathfrak{R}(|P||Q^*|)|)\end{bmatrix}\Big\|\nonumber\\
& ~~~+ \frac{1-\alpha}{2} \varphi\Big(w\Big(\begin{bmatrix}PQ & O\\O & QP\end{bmatrix}\Big)\Big).
\end{align*}
\end{thm}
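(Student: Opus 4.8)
The plan is to apply Theorem~\ref{thmphiwa2} directly to the off-diagonal operator $A=\begin{bmatrix}O & P\\Q & O\end{bmatrix}$ on the Hilbert space $\clh\oplus\clh$, and then to rewrite each of the three terms of inequality~(\ref{ineqphiwa2}) as the block-diagonal operator (or scalar) that appears in the statement. Since $\clh\oplus\clh$ is again a separable complex Hilbert space and $\alpha\in[0,1]$, Theorem~\ref{thmphiwa2} applies verbatim and gives
\[
\varphi(w^2(A))\leq \frac14\|\varphi(|A|^2)+\varphi(|A^*|^2)\|+\frac{\alpha}{2}\|\varphi(|\mathfrak{R}(|A||A^*|)|)\|+\frac{1-\alpha}{2}\varphi(w(A^2)).
\]

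First I would record the elementary block computations. From $A^*=\begin{bmatrix}O & Q^*\\ P^* & O\end{bmatrix}$ one gets $|A|^2=A^*A=\di(|Q|^2,|P|^2)$ and $|A^*|^2=AA^*=\di(|P^*|^2,|Q^*|^2)$, hence, taking positive square roots of these block-diagonal positive operators, $|A|=\di(|Q|,|P|)$ and $|A^*|=\di(|P^*|,|Q^*|)$; also $A^2=\di(PQ,QP)$. Next, using that the continuous functional calculus respects block-diagonal decompositions, i.e. $h(\di(S,T))=\di(h(S),h(T))$ for $h$ continuous on a set containing $\sigma(S)\cup\sigma(T)$, applied to $h=\varphi$, one obtains
\[
\varphi(|A|^2)+\varphi(|A^*|^2)=\begin{bmatrix}\varphi(|Q|^2)+\varphi(|P^*|^2) & O\\ O & \varphi(|P|^2)+\varphi(|Q^*|^2)\end{bmatrix}.
\]
For the middle term I compute $|A||A^*|=\di(|Q||P^*|,|P||Q^*|)$, so that $\mathfrak{R}(|A||A^*|)=\di(\mathfrak{R}(|Q||P^*|),\mathfrak{R}(|P||Q^*|))$; taking the absolute value and then applying $\varphi$ (both acting blockwise) yields $\varphi(|\mathfrak{R}(|A||A^*|)|)=\di\big(\varphi(|\mathfrak{R}(|Q||P^*|)|),\varphi(|\mathfrak{R}(|P||Q^*|)|)\big)$. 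Finally $\varphi(w(A^2))=\varphi\big(w(\di(PQ,QP))\big)$. Substituting these three identities into the displayed inequality produces exactly the claimed bound.

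The only point that requires care — and the one I would state explicitly — is the blockwise behaviour of the square root, the absolute value, and the Orlicz function $\varphi$: each operator involved here is self-adjoint (in particular normal), so the $*$-homomorphism property of the continuous functional calculus forces $h(\di(S,T))=\di(h(S),h(T))$ on the relevant spectra, and likewise the operator norm of a block-diagonal operator is the maximum of the norms of its blocks. Everything else is a routine substitution into Theorem~\ref{thmphiwa2}, so no genuine difficulty is expected.
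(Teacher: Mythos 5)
Your proposal is correct and follows essentially the same route as the paper's own proof: both apply Theorem~\ref{thmphiwa2} to $A$ on $\clh\oplus\clh$ and then identify $\varphi(|A|^2)+\varphi(|A^*|^2)$, $\varphi(|\mathfrak{R}(|A||A^*|)|)$, and $A^2$ as the corresponding block-diagonal operators. Your explicit justification that the continuous functional calculus (square root, absolute value, $\varphi$) acts blockwise on block-diagonal self-adjoint operators is a point the paper leaves implicit, but it is the same argument.
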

\begin{proof}
Suppose that $\phi $ is an Orlicz function and
$A=\begin{bmatrix}O & P\\Q & O\end{bmatrix}\in \clb(\clh \oplus \clh)$.
Then we have \\
\begin{center}
$\varphi(|A|^2)=\varphi(A^*A)=\begin{bmatrix}\varphi(|Q|^2) & O\\ O & \varphi(|P|^2)\end{bmatrix}$ and $\varphi(|A^*|^2)=\varphi(AA^*)=\begin{bmatrix}\varphi(|P^*|^2) & O\\ O & \varphi(|Q^*|^2)\end{bmatrix}$.
\end{center}
Further, since
\begin{center}
$\mathfrak{R}(|A||A^*|)=\begin{bmatrix}\mathfrak{R}(|Q||P^*|) & O\\ O & \mathfrak{R}(|P||Q^*|)\end{bmatrix}$, \\
we get $\varphi(|\mathfrak{R}(|A||A^*|)|)=\begin{bmatrix}\varphi(|\mathfrak{R}(|Q||P^*|)|) & O\\ O & \varphi(|\mathfrak{R}(|P||Q^*|)|)\end{bmatrix}$.
\end{center}
Therefore, for the operator $A=\begin{bmatrix}O & P\\Q & O\end{bmatrix}$, the desired inequality follows immediately from Theorem \ref{thmphiwa2}.
\end{proof}
We now have the following  remark:
\begin{rem}(see Remark 2.6, \cite{BHUKAL21OCT})
Let $P, Q\in \clb(\clh)$ and $A=\begin{bmatrix}O & P\\Q & O\end{bmatrix}\in \clb(\clh \oplus \clh)$. Then for $\varphi(t)=t$ for $t\geq 0$, and $\alpha=1$, one gets
\begin{align*}
w^2(A)& \leq \frac{1}{4}\max\Big\{\||Q|^2+ |P^*|^2\|, \||P|^2+ |Q^*|^2\|\Big\}
+\frac{1}{2}\max\Big\{\||\mathfrak{R}(|Q||P^*|)|\|, \||\mathfrak{R}(|P||Q^*|)|\|\Big\}.
\end{align*}
\end{rem}

\section{Conclusions}
 In this paper we have expanded the improved numerical radius inequalities for class of operators defined on a Hilbert space by using the Orlicz function and Hermite-Hadamard inequality. We have also been able to generalize and extend many known results in this important area of research.


\end{document}